\titleformat{\subsection}[runin]
{\bfseries} {\thesubsection{.}}{0.15cm}{}[.]
\titleformat{\subsubsection}[runin]
{\em}{\thesubsubsection{.}}{0.15cm}{}[.]
\newtheorem{theorem}{Theorem}[section]
\newtheorem{proposition}[theorem]{Proposition}
\newtheorem{lemma}[theorem]{Lemma}
\newtheorem{corollary}[theorem]{Corollary}
\theoremstyle{definition}
\newtheorem{definition}[theorem]{Definition}
\newtheorem{remark}[theorem]{Remark}
\newtheorem{problem}[theorem]{Problem}
\newtheorem{example}[theorem]{Example}
\numberwithin{equation}{section}
\numberwithin{figure}{section}
\newcommand\Ccal{\mathcal{C}}
\newcommand\Fcal{\mathcal{F}}
\newcommand\Hcal{\mathcal{H}}
\newcommand\Kcal{\mathcal{K}}
\newcommand\Mcal{\mathcal{M}}
\newcommand\Ncal{\mathcal{N}}
\newcommand\Pcal{\mathcal{P}}
\newcommand\Rcal{\mathcal{R}}
\newcommand\Tcal{\mathcal{T}}
\newcommand\be{\mathbf{e}}
\newcommand\bp{\mathbf{p}}
\newcommand\bq{\mathbf{q}}
\newcommand\bu{\mathbf{u}}
\newcommand\bv{\mathbf{v}}
\newcommand\bx{\mathbf{x}}
\newcommand\by{\mathbf{y}}
\newcommand\bw{\mathbf{w}}
\newcommand\bz{\mathbf{z}}
\newcommand\Cscr{\mathscr{C}}
\newcommand\B{\mathbb{B}}
\newcommand\C{\mathbb{C}}
\newcommand\D{\overline{\mathbb D}}
\renewcommand\D{\mathbb D}
\renewcommand\H{\mathbb{H}}
\newcommand\N{\mathbb{N}}
\newcommand\R{\mathbb{R}}
\newcommand\RP{\mathbb{RP}}
\newcommand\T{\mathbb{T}}
\newcommand\Z{\mathbb{Z}}
\newcommand\igot{\mathfrak{i}}
\renewcommand\igot{\mathfrak{i}}
\newcommand\E{\mathrm{e}}
\renewcommand\imath{\igot}
\newcommand\zero{\mathbf{0}}
\newcommand\di{\partial}
\newcommand\dist{\mathrm{dist}}
\renewcommand\span{\mathrm{span}}
\newcommand\Aut{\mathrm{Aut}}
\newcommand\Id{\mathrm{Id}}
\newcommand\Vol{\mathrm{Vol}}
\newcommand\CF{\mathrm{CF}}
\newcommand\CH{\mathrm{CH}}
\newcommand\HN{\mathrm{HN}}
\def\dist{\mathrm{dist}}
\def\span{\mathrm{span}}
\newcommand\CK{\mathcal{CK}}
\newcommand\nullq{{\mathbf A}}
\numberwithin{equation}{section}
\begin{document}

\fancyhead[LO]{Schwarz--Pick lemma for harmonic maps which are conformal at a point}
\fancyhead[RE]{F.\ Forstneri{\v c} and D.\ Kalaj}
\fancyhead[RO,LE]{\thepage}

\thispagestyle{empty}

\vspace*{1cm}
\begin{center}
{\bf\LARGE Schwarz--Pick lemma for harmonic maps \\ which are conformal at a point}

\vspace*{0.5cm}

{\large\bf  Franc Forstneri{\v c} and David Kalaj}
\end{center}

\vspace*{1cm}

\begin{quote}
{\small
\noindent {\bf Abstract}\hspace*{0.1cm}
In this paper we obtain a sharp estimate on the norm of the differential of a harmonic map 
from the unit disc $\D$ in $\C$ into the unit ball $\B^n$ of $\R^n$, $n\ge 2$, at any point where 
the map is conformal. For $n=2$ this generalizes the classical Schwarz--Pick lemma,
and for $n\ge 3$ it gives the optimal Schwarz--Pick lemma for conformal minimal 
discs $\D\to \B^n$. This implies that conformal harmonic maps 
$M \to \B^n$ from any hyperbolic conformal surface are distance-decreasing in the Poincar\'e metric 
on $M$ and the Cayley--Klein metric on the ball $\B^n$, and 
the extremal maps are  the conformal embeddings of the disc 
$\D$ onto affine discs in $\B^n$. Motivated by these results, we introduce an 
intrinsic pseudometric on any Riemannian manifold of dimension at least three
using conformal minimal discs, and we lay foundations of  the corresponding 
hyperbolicity theory.  

\vspace*{0.2cm}

\noindent{\bf Keywords}\hspace*{0.1cm}  harmonic map, conformal minimal surface, 
Schwarz--Pick lemma, Cayley--Klein metric 

\vspace*{0.1cm}

\noindent{\bf MSC (2020):}\hspace*{0.1cm}}
Primary: 53A10; Secondary: 30C80, 31A05, 32Q45

\vspace*{0.1cm}
\noindent{\bf Date: 
1 December 2021}

%
%
%
%
%
%
\end{quote}

%
%
%
%
\section{Introduction}\label{sec:intro} 
In this paper we establish precise estimates of derivatives and the rate of growth of 
conformal harmonic maps from hyperbolic conformal surfaces into the unit ball $\B^n$ 
of $\R^n$ for any $n\ge 3$; see Theorem \ref{th:metricdecreasing}. 
Such maps parameterize minimal surfaces, objects of high interest in geometry. 
To motivate the discussion, we begin with the following special case 
of one of our main results, Theorem 2.1. This generalizes the classical 
Schwarz--Pick lemma, due to H.\ A.\ Schwarz \cite[Bd. II, p.\ 108]{Schwarz1869} (1869),  
H.\ Poincar\'e \cite{Poincare1884} (1884), C.\ Carath\'eodory \cite{Caratheodory1912} (1912), 
and G.\ A.\ Pick \cite{Pick1915} (1915), to a substantially larger class of maps. 

%
%
\begin{theorem}\label{th:SP}
Let $\D=\{z\in\C:|z|<1\}$ denote the unit disc.
If $f:\D\to \D$ is a harmonic map which is conformal at a point $z\in \D$, then at this point we have that 
\vspace{-1mm}
\begin{equation}\label{eq:SP}
	\|df_z\| \le \frac{1-|f(z)|^2}{1-|z|^2},
\end{equation}
with equality if and only if $f$ is a conformal diffeomorphism of the disc $\D$.
\end{theorem}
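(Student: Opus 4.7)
The plan is to reduce, via the Poisson representation, to a sharp Fourier moment problem on the unit circle, which can then be closed by exhibiting an explicit Fej\'er-type dual trigonometric polynomial.

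Pre-composing $f$ with the M\"obius automorphism of $\D$ sending $0$ to $z$ preserves harmonicity (the automorphism is holomorphic) and scales the differential by $1-|z|^2$, so I may assume $z = 0$. Writing $f = h + \bar k$ with $h, k$ holomorphic, conformality at $0$ means that either $h'(0) = 0$ or $k'(0) = 0$; by replacing $f$ with $\bar f$ if needed, I arrange $k'(0) = 0$, so that $\|df_0\| = |h'(0)|$. Absorbing $\overline{k(0)}$ into $h$ yields $a := f(0) = h(0)$, so the desired inequality becomes $|h'(0)| \le 1-|a|^2$. For each $r \in (0,1)$, the function $g_r(\theta) := f(re^{i\theta})$ lies in $L^\infty(\S^1;\overline{\D})$ with $\|g_r\|_\infty \le 1$, and a direct computation from the Taylor expansions of $h$ and $k$ gives
\[
\hat g_r(0) = a, \qquad \hat g_r(-1) = r\,\overline{k'(0)} = 0, \qquad \hat g_r(1) = r\,h'(0).
\]
Letting $r \to 1^-$, the problem reduces to the following sharp moment inequality: every $g \in L^\infty(\S^1;\overline{\D})$ with $\hat g(0) = a$ and $\hat g(-1) = 0$ satisfies $|\hat g(1)| \le 1 - |a|^2$. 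A rotation in the target ($g \mapsto \lambda g$) together with a rotation in the source ($\theta \mapsto \theta + \phi$) further normalizes $a \in [0,1)$ and $c := \hat g(1) \in [0,\infty)$; the claim is then $c \le 1 - a^2$.

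The crux of the proof, and what I expect to be the main obstacle, is the identification of the right dual polynomial. I would use
\[
P(e^{i\theta}) := e^{-i\theta}(1+ae^{i\theta})^2 = e^{-i\theta} + 2a + a^2 e^{i\theta}.
\]
Since $|P(e^{i\theta})| = |1+ae^{i\theta}|^2 = 1 + a^2 + 2a\cos\theta \ge 0$ on $\S^1$, the $L^1$-norm of $P$ equals its mean value $1+a^2$. Pairing $P$ against $g$ and using the moment constraints yields
\[
c + 2a^2 \;=\; \frac{1}{2\pi}\int_0^{2\pi} P(e^{i\theta})\,g(\theta)\,d\theta \;\le\; \|P\|_{L^1} = 1+a^2,
\]
hence $c \le 1 - a^2$, completing the proof of the inequality. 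Note that the naive Parseval bound $\sum_n |\hat g(n)|^2 \le 1$ only gives $|\hat g(1)| \le \sqrt{1-|a|^2}$, so the sharp estimate essentially depends on the pointwise constraint $|g| \le 1$ together with the vanishing of $\hat g(-1)$ being used in tandem through this dual polynomial.

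For the equality case, saturation in the integral inequality forces $g(\theta) = \overline{P(e^{i\theta})}/|P(e^{i\theta})|$ almost everywhere, which a direct computation identifies with the boundary trace of the M\"obius automorphism $h_0(z) = (z+a)/(1+\bar a z)$ of $\D$ satisfying $h_0(0) = a$. Unwinding the reductions then shows that $\bar k \equiv 0$ and $f$ is itself a conformal (M\"obius) automorphism of $\D$, as asserted.
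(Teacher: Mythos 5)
Your proof is correct, and it reaches the result by a genuinely different and more elementary route than the paper. The paper obtains Theorem \ref{th:SP} as the $n=2$, $\theta=0$ case of Theorem \ref{th:main}, whose proof (Section \ref{sec:prep}, Lemma \ref{lem:main}) complexifies the extremal disc to a stationary holomorphic disc $F$ in the tube $\B^n\times\imath\R^n$, compares it with the holomorphic extension $G$ of a competitor, and exploits strong convexity of the ball through the boundary inequality $\langle f-g,f\rangle\ge 0$ weighted by the holomorphic certificate $\tilde f(z)=z|1+\bar a z|^2 f(z)$. You instead reduce, after the decomposition $f=h+\bar k$ and the observation that conformality at $0$ kills one of $h'(0)$, $k'(0)$, to a sharp moment problem for functions $g$ on the circle with $\|g\|_\infty\le 1$, $\hat g(0)=a$, $\hat g(-1)=0$, and you solve it with the explicit nonnegative dual polynomial $P(e^{\imath\theta})=e^{-\imath\theta}(1+ae^{\imath\theta})^2$. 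The two arguments share the same underlying extremality mechanism: your $P$ is essentially the boundary trace of the paper's stationary-disc certificate $\tilde f$ specialized to $n=2$ (with $c=1$, $p=0$), and the pairing $\frac{1}{2\pi}\int Pg$ plays the role of the paper's nonnegative harmonic function $\Re\,\langle (F-G)/z,\tilde f\rangle$ evaluated at $0$. What your version buys is a short, self-contained proof using only $L^1$--$L^\infty$ duality on $\T$ and the Poisson representation, with the equality case falling out at once ($g=\overline{P}/|P|$ a.e.\ is exactly the boundary value of the M\"obius map $w\mapsto (w+a)/(1+aw)$, and a bounded harmonic function is the Poisson integral of its boundary values). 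What it gives up is any access to $n\ge 3$, where the target is $\R^n$, the splitting $f=h+\bar k$ is unavailable, and the angle between $f(z)$ and $df_z(\R^2)$ enters the sharp constant; there the paper's convexity and stationary-disc argument is what carries the day.
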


The classical Schwarz--Pick lemma gives the same conclusion under the much stronger
hypothesis that the map $f$ is holomorphic or antiholomorphic, 
which means that it is conformal at every noncritical point;
see e.g.\ \cite{Dineen1989,Kobayashi2005,Royden1988}. 
This fundamental rigidity result in complex analysis leads to the notion of Kobayashi hyperbolic manifolds \cite{Kobayashi1967,Kobayashi1976,Kobayashi2005}
and provides a connection to complex differential geometry via the Ahlfors lemma 
(see \cite{Ahlfors1938}, \cite[Theorem 2.1]{Kobayashi2005}, \cite{Royden1988}) 
and its generalizations by S.-T.\ Yau \cite{Yau1978} and others.

The conditions in Theorem \ref{th:SP} are invariant under precompositions by 
holomorphic automorphisms of $\D$, so the proof reduces to the case 
$z=0$. On the other hand, postcompositions of harmonic maps into $\D$
by holomorphic automorphism of $\D$ need not be harmonic, so we cannot exchange $f(0)$ and $0$.
Hence, the standard proof of the classical Schwarz--Pick lemma breaks down.
The estimate \eqref{eq:SP} fails for some nonconformal 
harmonic diffeomorphisms of $\D$ (see Example \ref{ex:failure}), as well as for harmonic 
maps $\D\to D$ to more general domains which are conformal at a point (see Example \ref{ex:square} and Problem \ref{prob:disc}). 

The main results of the paper are precise estimates of the differential, and of the rate of growth 
of conformal harmonic maps $M\to \B^n$ from an open conformal surface, $M$, 
to the unit ball $\B^n$ of $\R^n$ for any $n\ge 3$.
It is classical that such maps parameterize minimal surfaces.
Indeed, a smooth conformal map $f:M\to \R^n$ 
from an open conformal surface $M$ into $\R^n$ with the Euclidean metric 
parameterizes a minimal surface in $\R^n$ if and only if $f$ is a harmonic map 
(see Osserman \cite{Osserman1986}, Duren \cite{Duren2004}, 
Alarc\'on et al.\ \cite[Chapter 2]{AlarconForstnericLopez2021}, among other sources). 
Note that an oriented conformal surface is a Riemann surface. 

The focal point of the paper is Theorem \ref{th:main}, which 
gives a precise upper bound on the norm $\|df_z\|$ 
of the differential $df_z$ of a harmonic map $f:\D\to\B^n$ at any point
$z\in \D$ where the map is conformal. The estimate is similar to the one in Theorem \ref{th:SP},
except that for $n\ge 3$ it also involves the angle $\theta$ between the position vector $f(z)\in\B^n$ and the 
2-plane $df_z(\R^2)\subset \R^n$. A related result (see Theorem \ref{th:main2}) shows that the worst 
case estimate, which occurs for $\theta=\pi/2$ (i.e., 
when the vector $f(z)$ is orthogonal to the plane $df_z(\R^2)$),  
holds for all harmonic maps $f:\D\to\B^n$ provided $\|df_z\|$ is replaced by 
$\sqrt2^{-1}|\nabla f(z)|$; these quantities coincide if $f$ is conformal at $z$.

We then give a differential geometric formulation and an extension of Theorem \ref{th:main}.
Let $\Ccal\Kcal$ denote the Cayley-Klein metric on the ball $\B^n$ $(n\ge 2)$, also called
the Beltrami--Klein metric; see \eqref{eq:CK} and the footnote
on p.\ \pageref{p:CK}. This metric is one of the classical models of hyperbolic geometry.
It coincides with the restriction of the Kobayashi metric on the 
complex ball $\B^n_\C\subset \C^n$ \eqref{eq:BC} (which is the same as 
$1/\sqrt{n+1}$ times the Bergman metric on $\B^n_\C$) to points of the real ball $\B^n$ and real tangent vectors.
Theorem \ref{th:main} implies that any conformal harmonic map $f:M\to \B^n,\ n\ge 3$, from a 
hyperbolic conformal surface is metric- and distance-decreasing in the Poincar\'e metric on $M$ 
and the Cayley--Klein metric on $\B^n$; see Theorem \ref{th:metricdecreasing}.   
Furthermore, if the differential $df_p$ has the operator norm equal to $1$ at some point $p\in M$
in this pair of metrics, or if $f$ preserves the distance between a pair of distinct points in $M$, 
then $M$ is necessarily the disc $\D$ and $f$ is a conformal diffeomorphism of 
$\D$ onto a proper affine disc in $\B^n$.  
In particular, a conformal harmonic disc $f:\D\to \B^n$ with $f(0)=0$ satisfies 
$|f(z)|\le |z|$ for all $z\in \D$ (see Corollary \ref{cor:disc0}). 

In Section \ref{sec:results} we give precise statements of the mentioned results. 
Theorem \ref{th:main} is proved in Section \ref{sec:prep}. We introduce a new idea into the subject, 
connecting it to Lempert's seminal work \cite{Lempert1981} from 1981
on complex geodesics of the Kobayashi metric on bounded convex domains in $\C^n$.
Theorem \ref{th:main2} is proven in Section \ref{sec:proofmain2}.  In Section \ref{sec:Beltrami}
we apply Theorem \ref{th:SP} to estimate the gradient of a quasiconformal harmonic self-map of the disc 
in terms of its second Beltrami coefficient at the reference point; see Theorem \ref{th:Beltrami}. 

Motivated by these result, we introduce in Section \ref{sec:hyperbolicity} an intrinsic
pseudometric on any domain in $\R^n,\ n\ge 3$ (and more generally on any
Riemannian manifold of dimension at least three) in terms of conformal
minimal discs, in analogy to Kobayashi's definition of his pseudometric on complex manifolds
in terms of holomorphic discs. This provides the basis for a new hyperbolicity theory
of such domains, and of Riemannian manifolds.

%
%
\section{The main results}\label{sec:results}
Given a differentiable map $f:\D\to \R^n$, we denote by $f_x$ and $f_y$
its partial derivatives with respect to $x$ and $y$, where $z=x+\imath y\in \D$. The gradient
$\nabla f=(f_x,f_y)$ is an $n\times 2$ matrix representing the differential $df$. 
The map $f$ is said to be {\em conformal} at $z\in \D$ if 
\begin{equation}\label{eq:conformal}
	|f_x(z)|=|f_y(z)| \quad \text{and}\quad f_x(z)\,\cdotp f_y(z)=0.
\end{equation}
Here, the dot stands for the Euclidean inner product on $\R^n$, and $|\bx|$ is the Euclidean norm 
of $\bx\in\R^n$. If $f$ is an immersion at $z$ then \eqref{eq:conformal} holds if and only if 
$df_z$ preserves angles. It follows from \eqref{eq:conformal} that $f$ has rank zero at any branch point.
We denote by $|\nabla f|$ the Euclidean norm of the gradient: 
\[
	|\nabla f(z)|^2=|f_x(z)|^2+|f_y(z)|^2, \quad\ z\in\D.
\]
If $f$ is conformal at $z$ then clearly 
$
	 \|df_z\| = {\sqrt 2}^{-1} |\nabla f(z)|= |f_x(z)| =  |f_y(z)|.
$
The map $f=(f_1,\ldots,f_n):\D\to\R^n$ is harmonic if and only if every component
$f_k$ is a harmonic function on $\D$, meaning that the Laplacian 
$\Delta f_k=\frac{\di^2 f_k}{\di x^2} + \frac{\di^2 f_k}{\di y^2}$
vanishes identically. 

We denote by $\B^n$ the unit ball of $\R^n$:
\begin{equation}\label{eq:ball}
	\B^n=\Big\{\bx=(x_1,\ldots,x_n)\in\R^n: |\bx|^2=\sum_{k=1}^n x_k^2<1\Big\}. 
\end{equation}
Our first main result is the following; it is proved in Section \ref{sec:prep}.

%
%
\begin{theorem} \label{th:main}
Let $f:\D\to\B^n$ for $n\ge 2$ be a harmonic map.
If $f$ is conformal at a point $z\in\D$ and $\theta \in [0,\pi/2]$ denotes the angle between 
the vector $f(z)$ and the plane $\Lambda=df_z(\R^2)\subset\R^n$, then 
\begin{equation}\label{eq:S}
	  \|df_z\| = \frac{1}{\sqrt{2}}\, |\nabla f(z)|  \ \le\ 
	  \frac{1-|f(z)|^2}{1-|z|^2}  \frac{1}{\sqrt{1-|f(z)|^2 \sin^2\theta}}, 
\end{equation}
with equality if and only if $f$ is a conformal diffeomorphism of $\D$ 
onto the affine disc $\Sigma=(f(z)+\Lambda) \cap\B^n$. (When $f(z)=0$
or $df_z=0$, the angle $\theta$ does not matter.) 
\end{theorem}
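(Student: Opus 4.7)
The plan is to reduce to the case $z = 0$ and then apply a Lempert-style complex-geodesic argument in the convex tube $\Omega = \B^n + \imath\R^n \subset \C^n$, using the holomorphic complexification of $f$.

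First, pre-composition with a Möbius automorphism of $\D$ sending $0$ to $z$ preserves harmonicity (since it is holomorphic) and preserves conformality at $0$ (since its differential at $0$ is $\C$-linear, a rotation scaled by $|\phi'(0)| = 1-|z|^2$); it preserves $f(0)$, $\Lambda$, and $\theta$, and scales $\|df_z\|$ by $1-|z|^2$. Hence one reduces to $z = 0$. A further reduction: orthogonal projection $\pi_W$ onto the subspace $W = \mathrm{span}(f(0), \Lambda) \subset \R^n$ (of real dimension at most $3$) commutes with the Laplacian, sends $\B^n$ into $W \cap \B^n$, and preserves $f(0), f_x(0), f_y(0)$, hence $\Lambda$ and $\theta$. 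So it suffices to treat $n \le 3$; the case $n = 2$ forces $\theta = 0$ and reduces to Theorem \ref{th:SP}, leaving the essential case $n = 3$ with $\theta \in (0, \pi/2]$. Write $p = f(0)$ and pick an orthonormal basis $v_1, v_2$ of $\Lambda$ so that $f_x(0) = \lambda v_1$, $f_y(0) = \lambda v_2$ with $\lambda = \|df_0\|$. Form the holomorphic extension $F = f + \imath \tilde f : \D \to \C^3$ with $\tilde f(0) = 0$. Then $F(0) = p$ and $F'(0) = f_x(0) - \imath f_y(0) = \lambda(v_1 - \imath v_2)$; the conformality at $0$ is the isotropy $F'(0)\cdot F'(0) = 0$ of this null vector. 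Since $\mathrm{Re}(F) = f : \D \to \B^n$, the image $F(\D)$ lies in the convex tube $\Omega$.

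Next I construct the candidate extremal complex disc. Set $a_j = p \cdot v_j$, $c_0 = -(a_1 + \imath a_2) \in \C$, and $R = \sqrt{1 - |p|^2 \sin^2\theta}$. Then $|c_0|^2 = a_1^2 + a_2^2 = |p|^2\cos^2\theta < R^2$, so $0 \in D(c_0, R)$; let $\phi_0 : \D \to D(c_0, R)$ be the Möbius conformal diffeomorphism with $\phi_0(0) = 0$ and $\phi_0'(0) > 0$, so that $\phi_0'(0) = (R^2 - |c_0|^2)/R = (1-|p|^2)/R$. Define $G : \D \to \C^3$ by $G(w) = p + \phi_0(w)(v_1 - \imath v_2)$. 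The computation
\[
|\mathrm{Re}(G(w))|^2 = |p|^2\sin^2\theta + |\phi_0(w) - c_0|^2 < |p|^2\sin^2\theta + R^2 = 1
\]
shows $G : \D \to \Omega$; moreover $G(0) = p$ and $G'(0) = \phi_0'(0)(v_1 - \imath v_2)$ is parallel to $F'(0)$.

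The decisive step is to show that $G$ realizes the Kobayashi (equivalently, by convexity, Carathéodory) norm of the null direction $v_1 - \imath v_2$ at $p$ in $\Omega$, i.e., that $G$ is a complex geodesic. Granted this, Lempert's duality supplies a holomorphic left inverse $r : \Omega \to \D$ with $r \circ G = \mathrm{id}_\D$ and $r(p) = 0$. Then $r \circ F : \D \to \D$ satisfies $r \circ F(0) = 0$, so the classical Schwarz lemma gives $|(r \circ F)'(0)| = \lambda\,|r'(p) \cdot (v_1 - \imath v_2)| \le 1$. Differentiating $r \circ G = \mathrm{id}_\D$ at $0$ yields $r'(p) \cdot (v_1 - \imath v_2) = 1/\phi_0'(0) = R/(1-|p|^2)$, and substitution gives exactly $\lambda \le (1 - |p|^2)/R$, which is the desired bound. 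Equality forces $r \circ F$ to be a rotation of $\D$ by Schwarz rigidity, so by uniqueness of complex geodesics $F$ must coincide with $G$ up to a rotation of the domain, whence $f = \mathrm{Re}(F)$ is a conformal diffeomorphism onto $\Sigma = (p + \Lambda) \cap \B^n$. The principal obstacle is precisely the verification that $G$ is the complex geodesic of $\Omega$ and the invocation of Lempert's duality for this unbounded convex tube: Lempert's original framework addresses bounded strictly convex domains with smooth boundary, so one must either extend that framework to $\Omega$ (using exhaustion or the explicit convex-tube structure) or construct the left inverse $r$ by hand, exploiting the null structure of $v_1 - \imath v_2$ and the one-parameter family of affine $2$-discs parallel to $\Lambda$ inside $\B^n$.
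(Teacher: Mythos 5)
Your reduction to $z=0$, your candidate extremal disc $G$, and the overall strategy of complexifying into the tube $\Omega=\B^n+\imath\R^n$ and arguing \`a la Lempert are the right ingredients, and they match the paper's framework. But the step you yourself flag as ``the principal obstacle'' is a genuine gap, and it is exactly the step the paper's proof is designed to supply. You need (i) a holomorphic left inverse $r:\Omega\to\D$ with $r\circ G=\mathrm{id}_\D$, and (ii) for the equality case, uniqueness of the complex geodesic through $p$ in the null direction $v_1-\imath v_2$. Neither follows from Lempert's theorem as stated: $\Omega$ is unbounded and not strictly convex (its boundary $b\B^n\times\imath\R^n$ contains real affine lines), so both the existence of the left inverse and, especially, the uniqueness needed for your rigidity argument require the extension of Lempert's theory to convex tubes (Jarnicki--Pflug, Zaj\k{a}c), which is a substantive black box rather than a routine citation. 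A smaller issue: your projection onto $W=\span(f(0),\Lambda)$ is fine for the inequality, but in the equality case it only shows that $\pi_W\circ f$ parametrizes the affine disc; you must still argue that the component of $f$ orthogonal to $W$ vanishes (e.g.\ because its squared norm is a bounded subharmonic function with a.e.\ vanishing radial boundary values).

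The paper closes the main gap by hand, bypassing the left inverse entirely. It observes that the boundary values of the extremal real disc satisfy a stationarity identity: writing the extremal disc as in \eqref{eq:extremal}, the map $\tilde f(z)=z\,|1+\bar a z|^2 f(z)$ on $b\D$ extends to a quadratic holomorphic polynomial \eqref{eq:tildef2}. Combining this with the convexity inequality $\langle f(z)-g(z),f(z)\rangle\ge 0$ a.e.\ on $b\D$ (where $f(z)$ is the outward unit normal to $b\B^n$ at $f(z)$, and $g$ is the competitor), one gets a nonnegative harmonic function $h(z)=\Re\,\langle (F(z)-G(z))/z,\tilde f(z)\rangle$ on $\D$ --- here $F$ and $G$ are the complexifications of the extremal disc and of the competitor, respectively, the reverse of your labeling --- whose value at the origin equals $(1-r)\,\Re\,\langle F'(0),\tilde f(0)\rangle$ with $\Re\,\langle F'(0),\tilde f(0)\rangle>0$. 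This yields $r\le 1$, and since $h>0$ on $\D$ unless the competitor equals the extremal disc, it also yields the full equality statement. If you want to complete your version, this explicit dual object $\tilde f$ is what you must construct in place of $r$; as written, your argument is an accurate roadmap but not yet a proof.
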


Note that the number $R=\sqrt{1-|f(z)|^2 \sin^2\theta}$ is the radius of the affine disc $\Sigma$.
In dimension $n=2$ we have $\theta=0$, so Theorem \ref{th:SP} 
is a special case of Theorem \ref{th:main}.
Without assuming that $f$ is conformal at $z$ or that $f(z)=0$, the inequality
\eqref{eq:S} fails for some harmonic diffeomorphisms of the disc as shown by Example \ref{ex:failure}.

For a fixed value of $|f(z)| \in [0,1)$, the maximum of the right hand side of \eqref{eq:S} 
over angles $\theta\in [0,\pi/2]$ equals $\frac{\sqrt{1-|f(z)|^2}}{1-|z|^2}$ and is reached 
precisely at $\theta=\pi/2$, i.e, when the vector $f(z)$ is orthogonal to $\Lambda=df_z(\R^2)$,
unless $f(z)=0$ when it is independent of $\theta$. 
It turns out that this weaker estimate holds for all harmonic maps $\D\to \B^n$
without any conformality assumption. The following result is proved in Section \ref{sec:proofmain2}.

%
%
\begin{theorem}\label{th:main2}
For every harmonic map $f:\D\to \B^n$  $(n\ge 2)$ we have that
\begin{equation}\label{eq:S2}
		\frac{1}{\sqrt{2}} |\nabla f(z)|  \ \le\  \frac{\sqrt{1-|f(z)|^2}}{1-|z|^2}, \quad\ z\in\D.
\end{equation}
Equality holds for some $z\in \D$ if $f(z)$ is orthogonal to the $2$-plane $\Lambda=df_{z}(\R^2)$ 
and $f$ is a conformal diffeomorphism onto the affine disc $\left(f(z)+\Lambda\right)\cap \B^n$.
In particular, if $f(z)=0$ then $|\nabla f(z)| \le \frac{\sqrt{2}}{1-|z|^2}$,
with equality if and only if $f$ is a conformal diffeomorphism onto the linear disc $\Lambda \cap \B^n$.
\end{theorem}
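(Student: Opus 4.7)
The plan is to reduce (\ref{eq:S2}) to the point $z=0$ by a M\"obius precomposition, and then extract the sharp bound from the vector-valued Fourier expansion of the boundary values of $f$ via Parseval's identity.

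First I would precompose $f$ with the disc automorphism $\phi(\zeta)=(\zeta+z)/(1+\bar z\zeta)$, which preserves harmonicity and satisfies $\phi(0)=z$, $\phi'(0)=1-|z|^2>0$. Since $d\phi_0$ is a conformal linear map with scaling factor $1-|z|^2$, both singular values of $d\tilde f_0$ with $\tilde f=f\circ\phi$ are $(1-|z|^2)$ times those of $df_z$; hence $|\nabla\tilde f(0)|^2=(1-|z|^2)^2|\nabla f(z)|^2$ while $\tilde f(0)=f(z)$. The inequality (\ref{eq:S2}) for $f$ at $z$ is therefore equivalent to $|\nabla\tilde f(0)|^2\le 2(1-|\tilde f(0)|^2)$, so after renaming it suffices to prove
\begin{equation*}
|\nabla f(0)|^2\ \le\ 2\bigl(1-|p|^2\bigr),\qquad p:=f(0)\in\B^n.
\end{equation*}

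Since $f$ is bounded harmonic on $\D$, Fatou's theorem gives nontangential boundary values $F\in L^\infty(\T,\R^n)$ with $|F(t)|\le 1$ almost everywhere, and $f=P[F]$. I would expand $F$ in a vector Fourier series with coefficients $c_k=\frac{1}{2\pi}\int_\T e^{-\I kt}F(t)\,dt\in\C^n$; because $F$ is real-valued, $c_{-k}=\overline{c_k}$ and $c_0\in\R^n$. The Poisson formula then reads $f(z)=c_0+\sum_{k\ge 1}(c_k z^k+c_{-k}\bar z^k)$, from which $p=c_0$, $\di f/\di z(0)=c_1$, $\di f/\di\bar z(0)=c_{-1}$. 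A direct computation (the parallelogram identity in $\C^n$) gives $|\nabla f(0)|^2=|c_1+c_{-1}|^2+|c_1-c_{-1}|^2=2|c_1|^2+2|c_{-1}|^2=4|c_1|^2$. Parseval's identity now yields
\begin{equation*}
|c_0|^2+2|c_1|^2\ \le\ \sum_{k\in\Z}|c_k|^2\ =\ \frac{1}{2\pi}\int_\T |F(t)|^2\,dt\ \le\ 1,
\end{equation*}
so $4|c_1|^2\le 2(1-|p|^2)$, which is exactly the desired bound.

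For equality, both Parseval estimates must be sharp, forcing $c_k=0$ for every $|k|\ge 2$ and $|F(t)|=1$ almost everywhere. Hence $F(t)=p+\mu\cos t+\nu\sin t$ with $\mu=2\Re c_1$ and $\nu=2\Im c_1$ in $\R^n$. Imposing $|F(t)|^2\equiv 1$ and matching the coefficients of $1,\cos t,\sin t,\cos 2t,\sin 2t$ forces the five conditions $p\cdot\mu=p\cdot\nu=\mu\cdot\nu=0$, $|\mu|=|\nu|$, and $|p|^2+|\mu|^2=1$. The Poisson integral then collapses to the affine map $f(z)=p+\mu x-\nu y$, which is a conformal diffeomorphism of $\D$ onto the affine disc $(p+\Lambda)\cap\B^n$ in the $2$-plane $\Lambda=\span(\mu,\nu)=df_0(\R^2)$, with $p\perp\Lambda$ exactly as in the theorem. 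The only delicate step is this trigonometric equality analysis; the inequality itself drops out of Parseval almost for free once $|\nabla f(0)|$ has been identified with $2|c_1|$.
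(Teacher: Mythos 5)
Your proof is correct and follows essentially the same route as the paper: reduce to $z=0$ by a M\"obius precomposition, identify $\tfrac12|\nabla f(0)|^2$ with the first Fourier coefficient, and apply Parseval together with $|f|\le 1$; the only cosmetic difference is that you work with the boundary values via Fatou's theorem, whereas the paper integrates $|f|^2$ over interior circles $|z|=r$ and lets $r\to 1$. Your equality analysis (forcing $c_k=0$ for $|k|\ge 2$, $|F|=1$ a.e., and then matching trigonometric coefficients to get $p\perp\Lambda$, $|\mu|=|\nu|$, $\mu\perp\nu$, $|p|^2+|\mu|^2=1$) is in fact slightly more detailed than the paper's and correctly recovers the extremal affine discs.
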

 
The estimate \eqref{eq:S2} only uses the hypothesis that the $L^1$-norm 
of $|f|^2=\sum_{k=1}^n f_k^2$ on the circles $\{|z|=r\}$ for $0<r<1$ is bounded by $1$. 
This clearly holds for maps into the ball; however, we do not know whether there are harmonic maps 
reaching (near) equality in \eqref{eq:S2} whose images are actually contained 
in the ball, except in the special cases indicated in the theorem. 
In this connection, see the discussion following Theorem \ref{th:Beltrami}. 

The precise upper bound on the size of the gradient $\|df_0\|$ of a nonconformal harmonic map
$f:\D\to\B^n$ with a given centre $f(0)=\bx\in \B^n\setminus\{0\}$ for $n\ge 2$
in terms of the distortion of $f$ at $0$ is unknown;
see the papers by Kovalev and Yang \cite{KovalevYang2020} and Brevig et al.\ \cite{BrevigOrtegaSeip2021} for $n=2$. 
On the other hand, for $n=1$ the {\em harmonic Schwarz lemma} 
(see Axler et al.\ \cite[Theorem 6.26]{AxlerBourdonRamey2001})
says that any harmonic function $f:\B^m \to (-1,+1)$ for $m\ge 2$ satisfies the sharp estimate
$
	|\nabla f(0)| \le \frac{2\Vol(\B^{m-1})}{\Vol(\B^m)}.
$ 
For $m=2$ the inequality reads
$
	|\nabla f(0)| \le \frac{4}{\pi},
$
and a simple proof in this case was given by Kalaj and Vuorinen \cite[Theorem 1.8]{KalajVuorinen2012}.

%
%
Let us mention a consequence of Theorem \ref{th:main} 
related to the Schwarz lemma for holomorphic discs 
in the ball of the complex Euclidean space:
\begin{equation}\label{eq:BC}
	\B^n_\C=\Big\{\bz=(z_1,\ldots,z_n)\in\C^n: |\bz|^2=\sum_{k=1}^n |z_k|^2<1\Big\}
\end{equation}
(see Rudin \cite[Sect.\ 8.1]{Rudin2008}). The following corollary to Theorem \ref{th:main} 
shows that the extremal holomorphic discs in $\B^n_\C$ are precisely those extremal orientation 
preserving conformal harmonic discs $\D\to\B^n_\C$ which parameterize affine complex discs.

%
%

\begin{corollary} 
\label{cor:Schwarzholo}
Let $f:\D\to\B^n_\C$ be a harmonic map which is conformal at a point $z\in \D$. 
If $\Lambda = df_{z}(\R^2)$ is a complex line in $\C^n$, then equality holds in \eqref{eq:S}
for this $z$ if and only if $f$ is a biholomorphic or anti-biholomorphic map onto the affine complex disc
$(f(z)+\Lambda)\cap \B^n_\C$.
\end{corollary}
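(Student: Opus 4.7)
The plan is to combine Theorem \ref{th:main} with the elementary observation that every conformal diffeomorphism between two planar discs is either holomorphic or antiholomorphic. Under the identification $\C^n\cong\R^{2n}$ one has $\B^n_\C=\B^{2n}$, so Theorem \ref{th:main} (applied with $2n$ in place of $n$) is directly available for maps $f:\D\to\B^n_\C$.

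By Theorem \ref{th:main}, equality in \eqref{eq:S} at the point $z$ occurs precisely when $f$ is a conformal diffeomorphism of $\D$ onto the affine real $2$-disc $\Sigma=(f(z)+\Lambda)\cap\B^n_\C$. The hypothesis that $\Lambda=df_z(\R^2)$ is a complex line in $\C^n$ forces $f(z)+\Lambda$ to be a complex affine line, so $\Sigma$ is in fact an affine complex disc carrying the complex structure inherited from $\C^n$. Choosing a unit complex vector $v$ spanning $\Lambda$ and letting $R$ denote the radius of $\Sigma$, the map $\phi:R\D\to\Sigma$, $\phi(w)=f(z)+wv$, is a biholomorphism. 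The composition $g:=\phi^{-1}\circ f:\D\to R\D$ is then a conformal diffeomorphism between plane domains; writing the conformality condition $|g_x|=|g_y|$, $g_x\cdot g_y=0$ in complex form gives either $g_{\bar w}\equiv 0$ or $g_w\equiv 0$ depending on whether $g$ preserves or reverses orientation, so $g$ is holomorphic or antiholomorphic on $\D$. Since $\phi$ is holomorphic, it follows that $f=\phi\circ g$ is either a biholomorphism or an anti-biholomorphism of $\D$ onto $\Sigma$, as required.

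The converse is immediate: any biholomorphism or anti-biholomorphism $f:\D\to\Sigma\subset\B^n_\C$ is componentwise harmonic (real and imaginary parts of holomorphic or antiholomorphic functions are harmonic) and conformal at every point (by the Cauchy-Riemann equations or their conjugate form), and its image $\Sigma$ is the affine disc $(f(z)+\Lambda)\cap\B^n_\C$ with $\Lambda$ a complex line. Hence Theorem \ref{th:main} yields equality in \eqref{eq:S} at $z$. There is no serious obstacle to this argument; the only substantive point is to recognise that the real affine disc produced by Theorem \ref{th:main} acquires a complex-analytic structure exactly when $\Lambda$ is a complex line, after which the classical dichotomy between holomorphic and antiholomorphic conformal maps in the plane finishes the proof.
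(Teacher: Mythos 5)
Your argument is correct and is exactly the derivation the paper intends: Corollary \ref{cor:Schwarzholo} is stated there as an immediate consequence of Theorem \ref{th:main} with no written proof, and your two ingredients --- the identification $\B^n_\C=\B^{2n}$ so that Theorem \ref{th:main} characterizes the equality case as a conformal diffeomorphism onto $\Sigma=(f(z)+\Lambda)\cap\B^n_\C$, plus the classical dichotomy that a conformal diffeomorphism between plane domains is holomorphic or antiholomorphic --- are precisely what is needed to fill it in. (A purely cosmetic remark: the parameter disc $\{w\in\C: f(z)+wv\in\Sigma\}$ is a round disc of radius $R$ centred at the coordinate of the point of $f(z)+\Lambda$ nearest the origin, not necessarily at $0$, but this changes nothing in your argument.)
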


%
%
\noindent{\bf The Cayley--Klein metric.} 
A differential geometric interpretation of the classical Schwarz--Pick lemma is that
holomorphic maps $\D\to\D$ are distance-decreasing in the Poincar\'e metric on $\D$,
and isometries coincide with holomorphic and antiholomorphic automorphisms of $\D$ 
(see Kobayashi \cite{Kobayashi2005}).
The analogous conclusion holds for holomorphic maps $\D\to \B^n_\C$ 
with the Kobayashi metric on $\B^n_\C$ \eqref{eq:BC}, where orientation-preserving isometric 
embeddings are precisely holomorphic embeddings onto affine complex discs in $\B^n_\C$. 

In the same spirit, we shall now interpret Theorem \ref{th:main} as the distance-decreasing property 
of conformal harmonic maps $\D\to\B^n$ with respect to the \label{p:CK}
{\em Cayley--Klein metric}\footnote{The Beltrami--Calvin--Klein model 
of hyperbolic geometry was introduced by Arthur Cayley \cite{Cayley1859} (1859) 
and Eugenio Beltrami \cite{Beltrami1868} (1868), and it was developed by Felix Klein 
\cite{Klein1871,Klein1873} (1871, 1873). The underlying space is the $n$-dimensional unit ball,
geodesics are straight line segments with ideal endpoints on the boundary sphere, and the distance 
between points on a geodesic is given by a cross ratio. 
This is a special case of the Hilbert metric on convex domains in $\R^n$ and $\RP^n$,  
introduced by David Hilbert in 1885 \cite{Hilbert1885}. These are examples of 
projectively invariant metrics discussed by many authors; see the surveys 
by S.\ Kobayashi \cite{Kobayashi1977,Kobayashi1984}, W.\ M.\ Goldman \cite{Goldman2019}, 
and J.\ G.\ Ratcliffe \cite{Ratcliffe1994}.}
on $\B^n$:
\begin{equation}\label{eq:CK}
	\CK(\bx,\bv) = \frac{\sqrt{1-|\bx|^2 \sin^2\phi}}{1-|\bx|^2} \, |\bv|, \quad\  \bx\in\B^n,\ \bv\in \R^n,
\end{equation}
where $\phi \in [0,\pi/2]$ is the angle between the vector $\bx$ and the line $\R\bv$. Equivalently,
\begin{equation}\label{eq:CKsquared}
	\CK(\bx,\bv)^2 = \frac{(1-|\bx|^2)|\bv|^2 + |\bx\,\cdotp \bv|^2}{(1-|\bx|^2)^2}
	= \frac{|\bv|^2}{1-|\bx|^2} + \frac{|\bx\,\cdotp \bv|^2}{(1-|\bx|^2)^2}.
\end{equation}
Let $G_2(\R^n)$ denote the Grassmann manifold of $2$-planes in $\R^n$.
We define a Finsler pseudometric $\Mcal:\B^n\times  G_2(\R^n)\to \R_+$ by
\begin{equation}\label{eq:Mcal}
	\Mcal(\bx,\Lambda) \ =\ \frac{\sqrt{1-|\bx|^2 \sin^2\theta}} {1-|\bx|^2},\quad \ 
	\bx\in \B^n, \ \Lambda\in  G_2(\R^n),
\end{equation}
where $\theta\in[0,\pi/2]$ is the angle between $\bx$ and $\Lambda$. 
At $\bx=\zero$ we have $\Mcal(\zero,\Lambda)=1$ for all $\Lambda\in  G_2(\R^n)$.
Assume now that $\bx\ne\zero$. 
Let $\bv\in\R^n\setminus \{\zero\}$ be a vector having angle $\phi\in[0,\pi/2]$ with the line $\R\bx$.
The angle $\theta$ between $\bx$ and any 2-plane $\Lambda$ containing $\bv$ 
satisfies $0\le \theta\le \phi$, and the maximum of $\theta$ over all such 
$\Lambda$ equals $\phi$. Hence, \eqref{eq:CK} gives
\begin{eqnarray}\label{eq:min}
	\CK(\bx,\bv)/|\bv| \ &=& \ \min\bigl\{ \Mcal(\bx,\Lambda): \Lambda\in  G_2(\R^n),\ \bv\in\Lambda\bigr\},\\
	\label{eq:max}
	\Mcal(\bx,\Lambda) \ &=& \ \max\bigl\{\CK(\bx,\bv)/|\bv| : \bv\in\Lambda\bigr\}.
\end{eqnarray}

The inequality \eqref{eq:S} in Theorem \ref{th:main} is obviously equivalent to  
\begin{equation}\label{eq:Sbis}
	 \Mcal(f(z),df_z(\R^2)) \,  |df_z(\xi)|
	  =    \frac{\sqrt{1-|f(z)|^2 \sin^2\theta}}{1-|f(z)|^2} \,  |df_z(\xi)|   
	  \ \le\ \frac{|\xi|} {1-|z|^2}, 
\end{equation}
where $\theta\in[0,\pi/2]$ is the angle between $f(z)$ and the 2-plane $\Lambda=df_z(\R^2)$.
By \eqref{eq:min} the left hand side of \eqref{eq:Sbis} is bigger than or equal to $\CK(f(z),df_z(\xi))$.
Equality holds if and only if the angle $\phi$ between the line $f(z)\R$ and the vector 
$df_z(\xi)\in \Lambda$ equals $\theta$; clearly this holds if and only if $df_z(\xi)$ is tangent 
to the diameter of the affine disc $\Sigma=(f(z)+\Lambda)\cap \B^n$ through the point $f(z)$.
This, and the addition concerning equality in \eqref{eq:S}, give the following corollary
to Theorem \ref{th:main}. Note that $\Pcal_\D(z,\xi)=\frac{|\xi|}{1-|z|^2}$ is the 
Poincar\'e metric on the disc.

%
%
\begin{corollary}\label{cor:metricdecreasing}
If $f:\D\to\B^n$ is a conformal harmonic map then for every $z\in\D$ and $\xi\in\R^2$ we have that
\begin{equation}\label{eq:metricdecreasing}
	\CK\big(f(z),df_z(\xi)\big) \le \frac{|\xi|}{1-|z|^2} = \Pcal_\D(z,\xi). 
\end{equation}
Equality holds for some $z\in \D$ and $\xi\in \R^2\setminus \{0\}$ if and only if 
$f$ is a conformal diffeomorphism onto the affine disc $\Sigma=(f(z)+df_z(\R^2))\cap \B^n$ 
and the vector $df_z(\xi)$ is tangent to the diameter of $\Sigma$ through the point $f(z)$.
\end{corollary}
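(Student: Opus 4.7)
The plan is to deduce Corollary \ref{cor:metricdecreasing} directly from Theorem \ref{th:main} by translating its sharp scalar estimate \eqref{eq:S} into the vector form \eqref{eq:metricdecreasing} via the identity \eqref{eq:min}, which relates the Cayley--Klein norm $\CK$ to the auxiliary Finsler pseudometric $\Mcal$ on $\B^n\times G_2(\R^n)$. The argument is essentially bookkeeping; the only substantive geometric step is the identification of the equality configuration.

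For the inequality, fix $z\in\D$ and $\xi\in\R^2\setminus\{0\}$ (the case $\xi=0$ is trivial, and the degenerate cases $df_z=0$ or $f(z)=\zero$ follow directly from \eqref{eq:CK}). Set $\Lambda=df_z(\R^2)$. Since $f$ is conformal at $z$, $|df_z(\xi)|=\|df_z\|\cdot|\xi|$, so multiplying \eqref{eq:S} through by $|\xi|$ gives the equivalent form \eqref{eq:Sbis}, namely
\[
\Mcal\bigl(f(z),\Lambda\bigr)\,|df_z(\xi)| \ \le\ \frac{|\xi|}{1-|z|^2}.
\]
Applying \eqref{eq:min} with $\bx=f(z)$ and $\bv=df_z(\xi)\in\Lambda$ gives
\[
\CK\bigl(f(z),df_z(\xi)\bigr) \ \le\ \Mcal\bigl(f(z),\Lambda\bigr)\,|df_z(\xi)|,
\]
and chaining the two displays yields \eqref{eq:metricdecreasing}.

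For the equality clause, suppose equality holds in \eqref{eq:metricdecreasing} at some $(z,\xi)$ with $\xi\ne0$; then equality must hold in each of the above inequalities. Equality in the first is the extremal case of Theorem \ref{th:main}, which forces $f$ to be a conformal diffeomorphism of $\D$ onto $\Sigma=(f(z)+\Lambda)\cap\B^n$. Equality in the second means that $\Lambda$ attains the minimum of $\Mcal(f(z),\cdot)$ over all $2$-planes containing $df_z(\xi)$; comparing \eqref{eq:CK} with \eqref{eq:Mcal}, this is equivalent to $\phi=\theta$, where $\phi\in[0,\pi/2]$ is the angle between $f(z)$ and the line $\R\,df_z(\xi)$ and $\theta\in[0,\pi/2]$ is the angle between $f(z)$ and $\Lambda$. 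The converse direction is a direct substitution into \eqref{eq:CK} and \eqref{eq:Mcal}.

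The main (and only nontrivial) step is to identify ``$\phi=\theta$'' with the geometric condition that $df_z(\xi)$ be tangent to the diameter of $\Sigma$ through $f(z)$. One always has $\phi\ge\theta$, with equality iff $df_z(\xi)$ is proportional to the orthogonal projection $P_\Lambda f(z)$ of $f(z)$ onto $\Lambda$; and since the center of $\Sigma$ is $f(z)-P_\Lambda f(z)$, the vector $P_\Lambda f(z)$ is precisely the tangent direction at $f(z)$ of the diameter of $\Sigma$ through $f(z)$ (the case $P_\Lambda f(z)=\zero$, where $f(z)$ is itself the center and every direction in $\Lambda$ is trivially a diameter direction, being handled separately). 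This reduces to linear algebra inside the $3$-plane spanned by $f(z)$ and $\Lambda$ and poses no serious obstacle.
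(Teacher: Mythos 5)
Your proposal is correct and follows essentially the same route as the paper: the authors likewise rewrite \eqref{eq:S} as \eqref{eq:Sbis}, invoke \eqref{eq:min} to pass to $\CK$, and characterize equality by the condition $\phi=\theta$, identified with tangency of $df_z(\xi)$ to the diameter of $\Sigma$ through $f(z)$. Your explicit verification that the diameter direction is $P_\Lambda f(z)$ (since the center of $\Sigma$ is $f(z)-P_\Lambda f(z)$) is a correct expansion of the step the paper dismisses with ``clearly.''
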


This shows in particular that every linear conformal embedding
$f:\D\to\Sigma$ onto a proper affine disc in $\B^n$ is geodesic on each diameter
$(-1,+1)\ni r\mapsto f(re^{\imath t})\in \Sigma$ for every fixed $t\in\R$. 
However, distances between points of different rays strictly decrease from the Poincar\'e metric on $\D$
to the Cayley--Klein metric on the disc $\Sigma\subset \B^n$.

%
%
\begin{remark}\label{rem:miminalmetric}
The Cayley--Klein metric \eqref{eq:CKsquared} is the 
restriction of the Kobayashi metric on the unit ball $\B^n_\C \subset \C^n$ to points 
$\bx\in\B^n=\B^n_\C\cap\R^n$ of the real ball and tangent vectors in $T_\bx\R^n\cong\R^n$. 
A direct geometric argument was given by Lempert in \cite[proof of Theorem 3.1]{Lempert1993}.
The Cayley--Klein metric also equals $1/\sqrt{n+1}$ times the Bergman metric on $\B^n_\C$ 
restricted to $\B^n$ and real tangent vectors; see Krantz \cite[Proposition 1.4.22]{Krantz1992}. 
(On the ball of $\C^n$, most holomorphically invariant metrics coincide up to scalar factors.)
The Cayley--Klein metric equals the Poincar\'e metric $\frac{|\bv|}{1-|\bx|^2}$ on $\B^n$ 
on vectors $\bv$ parallel to the base point $\bx\in\B^n$, but is strictly smaller 
on other vectors. While the Poincar\'e metric on $\B^n$ is conformally equivalent to the 
Euclidean metric, the Cayley--Klein metric is not.   
\end{remark}

%
%
We now extend Corollary \ref{cor:metricdecreasing} to more general minimal surfaces.
A {\em conformal surface} is a topological surface, $M$, together with a {\em conformal atlas},
i.e., one whose transition maps between charts are conformal diffeomorphisms between plane domains. 
Every surface admits a conformal structure. Indeed, every topological 
surface admits a smoothing, and a conformal structure on a smooth surface is determined by the choice 
of a Riemannian metric in view of the existence of local isothermal coordinates 
(see \cite{Osserman1986} or \cite[Theorem 1.8.6]{AlarconForstnericLopez2021}).  
Oriented conformal surfaces are Riemann surfaces.
There is a well-defined notion of a harmonic function
on a conformal surface. Indeed, a Riemannian metric $g$ 
defines the metric Laplacian $\Delta_g$, hence $g$-harmonic functions 
satisfying $\Delta_g h=0$. The Laplacians associated to any two Riemannian metrics in the same 
conformal class on a surface differ by a positive multiplicative function  
(see \cite[Corollary 1.8.2]{AlarconForstnericLopez2021}), and hence the notion of a harmonic function
is independent of the choice of metric in a given conformal class. 

A conformal surface $M$ is said to be {\em hyperbolic} 
if its universal conformal covering space is the disc $\D$.
Let $h:\D\to M$ be a universal conformal covering map. Since conformal automorphisms of $\D$ 
are isometries of the Poincar\'e metric $\Pcal_\D= \frac{|dz|}{1-|z|^2}$, 
there is a unique Riemannian metric $\Pcal_M$ on $M$ (a K\"ahler metric if $M$ is a Riemann surface) 
such that $h$ is a local isometry. This Poincar\'e metric $\Pcal_M$ is a
complete metric of constant Gaussian curvature $-4$ (see \cite[p.\ 48, Example 2]{Kobayashi2005}),
which agrees with the Kobayashi metric if $M$ is a Riemann surface. 
This leads to the following generalization of Corollary \ref{cor:metricdecreasing}.

%
%
\begin{theorem}[Metric- and distance-decreasing property of conformal harmonic maps] 
\label{th:metricdecreasing}
Let $M$ be a connected hyperbolic conformal surface endowed with the Poincar\'e
metric $\Pcal_M$. Every conformal harmonic map $f:M\to \B^n$ $(n\ge 3)$ satisfies 
\begin{equation}\label{eq:dd2}
	\CK\big(f(p),df_p(\xi)\big) \ \le\ \Pcal_M(p,\xi),\quad\  p\in M,\ \xi\in T_pM.
\end{equation}
Hence, $f$ is distance-decreasing in this pair of metrics.
If equality holds in \eqref{eq:dd2} for some point $p\in M$ and vector $0\ne \xi\in T_pM$,
or if $f$ preserves the distance on a pair of distinct points in $M$, 
then $M=\D$ and $f$ is a conformal diffeomorphism onto an affine disc in $\B^n$. 
\end{theorem}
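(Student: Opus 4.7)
The plan is to deduce Theorem \ref{th:metricdecreasing} from Corollary \ref{cor:metricdecreasing} by lifting to the universal cover. Fix a universal conformal covering $h:\D\to M$. Because $M$ is a conformal surface and $h$ is conformal, the composition $\wt f=f\circ h:\D\to\B^n$ is conformal (conformality of $h$ is preserved under composition with conformal $f$) and harmonic (the Laplacian transforms conformally up to a positive factor, so harmonicity is a conformal invariant on a surface). Given $p\in M$ and $\xi\in T_pM$, pick $z\in\D$ with $h(z)=p$ and let $\eta\in T_z\D$ be the unique vector with $dh_z(\eta)=\xi$. By construction of the Poincar\'e metric $\Pcal_M$, the map $h$ is a local isometry, so $\Pcal_M(p,\xi)=\Pcal_\D(z,\eta)$. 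Applying Corollary \ref{cor:metricdecreasing} to $\wt f$ at $z$ gives
\[
    \CK\bigl(f(p),df_p(\xi)\bigr)=\CK\bigl(\wt f(z),d\wt f_z(\eta)\bigr)\ \le\ \Pcal_\D(z,\eta)=\Pcal_M(p,\xi),
\]
which is \eqref{eq:dd2}. The distance-decreasing property then follows by integrating this infinitesimal estimate along piecewise smooth paths and taking the infimum.

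For the rigidity statement, suppose first that equality holds in \eqref{eq:dd2} at some $p\in M$ and $0\ne\xi\in T_pM$. Then $\wt f$ realizes equality in Corollary \ref{cor:metricdecreasing} at $(z,\eta)$, so $\wt f$ is a conformal diffeomorphism of $\D$ onto an affine disc $\Sigma\subset\B^n$. In particular $\wt f=f\circ h$ is injective, which forces the covering $h$ to be injective, hence a conformal diffeomorphism $\D\to M$. Therefore $M=\D$ (up to conformal equivalence) and $f$ itself is the conformal diffeomorphism onto $\Sigma$.

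Now suppose that $f$ preserves the distance between two distinct points $p_1,p_2\in M$. Since $\Pcal_M$ is complete, there is a Poincar\'e-geodesic $\gamma:[0,L]\to M$ from $p_1$ to $p_2$ of length $L=\Pcal_M(p_1,p_2)$. The infinitesimal inequality \eqref{eq:dd2} gives that the Cayley--Klein length of $f\circ\gamma$ is at most $L$, while by hypothesis it is at least $\CK(f(p_1),f(p_2))=L$. Therefore equality holds, and integrating an inequality that becomes an equality forces pointwise equality $\CK(f(\gamma(t)),df_{\gamma(t)}(\dot\gamma(t)))=|\dot\gamma(t)|_{\Pcal_M}$ for almost every $t$. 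At any such $t$ we are back in the previous case, which yields $M=\D$ and the desired conclusion.

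The main obstacle, if there is one, is the distance-equality step: one must be careful that the Poincar\'e-geodesic exists (completeness) and that the integrated equality really propagates to an infinitesimal equality at some interior point so that the rigidity from Corollary \ref{cor:metricdecreasing} applies. The rest is a routine lift-and-descend argument, the only mild subtlety being the check that pre-composition by a conformal covering preserves both conformality and harmonicity, which is exactly what allows the problem to be reduced from $M$ to $\D$.
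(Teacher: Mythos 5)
Your proof is correct and follows essentially the same route as the paper: lift to the universal conformal cover, apply Corollary \ref{cor:metricdecreasing} to $f\circ h$, and in the equality case use injectivity of $f\circ h$ to force $h$ to be a conformal diffeomorphism. Your treatment of the distance-preservation case (minimizing geodesic via completeness, then equality almost everywhere along it) is a correctly fleshed-out version of the paper's one-line remark that distance preservation forces the differential to have operator norm $1$ at some point.
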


Note that Theorem \ref{th:metricdecreasing} is nontrivial only if the surface $M$ is 
{\em of hyperbolic type}, i.e.\ it admits a nonconstant bounded harmonic function 
(see Farkas and Kra \cite[p.\ 179]{FarkasKra1992} and Grigor'yan \cite{Grigoryan1999BAMS}).
Every surface of hyperbolic type is also hyperbolic.

\begin{proof}
Assume first that $M$ is orientable, hence a Riemann surface.
Choose a holomorphic covering map $h:\D\to M$ and a point $z\in \D$ with $h(z)=p$.
The conformal harmonic map $\tilde f = f\circ h:\D \to \B^n$ then satisfies $\tilde f(z)=f(p)$
and $d\tilde f_z=df_p\circ dh_z$. Let $\eta\in\R^2$ be such that $dh_z(\eta)=\xi$.
Then, $\Pcal_M(p,\xi)= \Pcal_\D(z,\eta)$ by the definition of the metric $\Pcal_M$,
and $d\tilde f_z(\eta) = df_p(\xi)$. From \eqref{eq:metricdecreasing} it follows that 
\[
	\CK\big(f(p),df_p(\xi)\big) = 
	\CK\big(\tilde f(z),d\tilde f_z(\eta)\big) \le \frac{|d\tilde f_z(\eta)|}{1-|\tilde f(z)|^2}
	= \frac{|df_p(\xi)|}{1-|f(p)|^2},
\]
which gives \eqref{eq:dd2}. If $\xi\ne 0$ and equality holds, then by Corollary \ref{cor:metricdecreasing} 
the map $\tilde f=f\circ h:\D\to \B^n$ is a conformal diffeomorphism onto an affine disc in $\B^n$, 
and hence $h:\D\to M$ is a biholomorphism. For a nonorientable hyperbolic conformal surface 
$M$ we obtain the same conclusion by passing to its orientable 2-sheeted conformal cover.
The statement concerning distances is an immediate consequence. 
Note that if the distances agree for a pair of distinct points 
in $M$ and their images in $\B^n$, then the differential $df_p$ has operator norm $1$ at some 
point $p\in M$ in the given pair of metrics.
\end{proof}

On the disc with the Poincar\'e metric $\Pcal_\D=\frac{|dz|}{1-|z|^2}$, the Poincar\'e distance equals
\begin{equation}\label{eq:ballP}
	\dist_\Pcal(z,w)=\frac12 \log\left( \frac{|1- z\overline w|+ |z-w|}{|1- z\overline w| - |z-w|}\right),
	\quad z,w\in\D.
\end{equation}
The Cayley--Klein distance function on the ball $\B^n$   
coincides up to a scalar factor $\sqrt{n+1}$ with the restriction to $\B^n$ of the
Bergman distance function on the complex ball $\B^n_\C$ or, equivalently, with the restriction to 
$\B^n$ of the Kobayashi distance function on $\B^n_\C$. The following explicit formula for the 
Kobayashi distance between a pair of points $\bz,\bw\in\B^n_\C$
can be found in \cite[p.\ 437]{Krantz1992} 
(here, $\bz\,\cdotp\overline \bw=\sum_{k=1}^n z_k\overline w_k$).
\begin{equation}\label{eq:ballM}
	\dist(\bz,\bw)=\frac12 
	\log\left( 
	\frac{|1- \bz\,\cdotp\overline \bw| + \sqrt{|\bz-\bw|^2 +|\bz\,\cdotp\overline \bw|^2 -|\bz|^2|\bw|^2}}
	{|1- \bz\,\cdotp\overline \bw| - \sqrt{|\bz-\bw|^2 +|\bz\,\cdotp\overline \bw|^2 -|\bz|^2|\bw|^2}} \right).
\end{equation}
As said before, the same formula applied to points in $\B^n$Gives the Cayley--Klein distance.
Taking $w=0$ and $\bw=0$ in the above formulas, we obtain 
\[
	\dist_{\Pcal}(z,0)=\frac12  \log\left( \frac{1+ |z|}{1-|z|}\right) \ \ (z\in\D),\quad
	\dist(\bz,\zero)      =\frac12 \log\left( \frac{1+ |\bz|}{1-|\bz|}\right) \ \ (\bz\in\B^n).
\]
Together with Theorem \ref{th:metricdecreasing} this gives implies the following corollary.

\begin{corollary}\label{cor:disc0}
If $f:\D\to\B^n,\ n\ge 3,$ is a conformal harmonic map with $f(0)=\zero$, then $|f(z)|\le |z|$ 
holds for all $z\in \D$.  Equality at one point $z\in \D\setminus\{0\}$ implies that $f$ is a conformal 
parameterization of a linear disc obtained by intersecting $\B^n$ with a plane through the origin, 
and hence equality holds at all points.
\end{corollary}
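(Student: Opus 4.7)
The plan is to apply Theorem \ref{th:metricdecreasing} directly and then translate the distance inequality into the desired radial estimate using the explicit formulas \eqref{eq:ballP} and \eqref{eq:ballM} specialized to the origin.

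First I would observe that $\D$ is itself a hyperbolic conformal surface whose Poincar\'e metric is the standard one, so by Theorem \ref{th:metricdecreasing} the map $f$ is distance-decreasing from the Poincar\'e distance on $\D$ to the Cayley--Klein distance on $\B^n$. Applied to the pair of points $0$ and $z$, using $f(0)=\zero$, this gives
\[
	\dist(f(z),\zero) \ \le\ \dist_{\Pcal}(z,0).
\]
Substituting the two closed-form expressions displayed just before the corollary,
\[
	\frac12 \log\!\left(\frac{1+|f(z)|}{1-|f(z)|}\right) \ \le\ \frac12 \log\!\left(\frac{1+|z|}{1-|z|}\right),
\]
and using that $t\mapsto\frac{1+t}{1-t}$ is strictly increasing on $[0,1)$, I would conclude $|f(z)|\le |z|$.

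For the equality case, suppose $|f(z_0)|=|z_0|$ at some $z_0\in\D\setminus\{0\}$. Then the displayed inequality is an equality, which means $f$ preserves the distance between the two distinct points $0$ and $z_0$. The second clause of Theorem \ref{th:metricdecreasing} then forces $f$ to be a conformal diffeomorphism of $\D$ onto an affine disc $\Sigma\subset \B^n$. Since $\zero=f(0)\in \Sigma$, the affine disc passes through the origin and is therefore the intersection of $\B^n$ with a 2-plane through $\zero$, i.e.\ a linear disc. Because conformal diffeomorphisms of $\D$ onto such linear discs are isometries in the two metrics (distance between any two points equals $\dist_\Pcal$ on the preimage side and equals $\dist$ on the image side, both depending only on the radial coordinate in view of the explicit formulas above), equality $|f(z)|=|z|$ then holds at every point of $\D$.

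The only potential obstacle is the equality discussion: one has to make sure that the distance-preserving hypothesis in Theorem \ref{th:metricdecreasing} is genuinely triggered (not just the metric-preserving one at a single tangent vector). This is automatic here since we have two distinct points $0$ and $z_0$ with matching distances, which is exactly the condition stated in the second clause of that theorem.
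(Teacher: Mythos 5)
Your argument is correct and is essentially the paper's own proof: the corollary is obtained by specializing the distance-decreasing statement of Theorem \ref{th:metricdecreasing} to the pair $(0,z)$ via the two explicit distance-to-the-origin formulas \eqref{eq:ballP} and \eqref{eq:ballM}, with the equality case handled by the second clause of that theorem. One small caveat on your final parenthetical: a conformal diffeomorphism of $\D$ onto a linear disc fixing the origin is \emph{not} an isometry of the two metrics on arbitrary pairs of points (the paper notes that distances between points on different rays strictly decrease); it only preserves distances from the origin, which---since such a map is $z\mapsto e^{\imath t}z$ or $z\mapsto e^{\imath t}\bar z$ followed by a linear isometry of $\R^n$---is exactly what you need to conclude $|f(z)|=|z|$ everywhere.
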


%
%
\section{Proof of Theorem \ref{th:main}}\label{sec:prep}
It suffices to prove Theorem \ref{th:main} for $z=0$.
Indeed, with $f$ and $z$ as in the theorem, let $\phi_z\in\Aut(\D)$ be 
such that $\phi_z(0)=z$. The harmonic map $g=f\circ\phi_z:\D\to \B^n$ is then conformal at the origin.
Since $|\phi_z'(0)|=1-|z|^2$, \eqref{eq:S} follows from the same estimate for $g$
at $z=0$. On the image side, the hypotheses and the statement of the theorem
are invariant under postcomposition of maps $\D\to\B^n$ by
elements of the orthogonal group $O_n$.

We begin by an explicit description of conformal parameterizations of proper affine discs in $\B^n$.
Fix a point $\bq\in \B^n$ and a linear $2$-plane $0\in \Lambda\subset \R^n$, and
consider the affine disc $\Sigma = (\bq+\Lambda)\cap \B^n$. Let us identify
conformal parameterizations $\D\to\Sigma$ sending $0$ to $\bq$.
Let $\bp\in\Sigma$ be the closest point to the origin. If $n=2$ then $\bp=0$ and $\Sigma=\D$.
Suppose now that $n\ge 3$. Up to an orthogonal rotation, we may assume that
\begin{equation}\label{eq:Sigmaf2}
	\bp=(0,0,p,0\ldots,0)\quad\text{and}\quad \Sigma=\left\{(x,y,p,0,\ldots,0): x^2+y^2 <1-p^2 \right\}.
\end{equation}
Let $\bq=(b_1,b_2,p,0,\ldots,0)\in\Sigma$, and let $\theta$ denote the angle between
$\bq$ and $\Sigma$. Set
\begin{equation}\label{eq:constants}
	c=\sqrt{1-p^2}=\sqrt{1-|\bq|^2\sin^2\theta},\quad\
	a=\frac{b_1+\imath b_2}{c}\in\D,\quad\  |a|= \frac{|\bq|\cos\theta}{c}.
\end{equation}
We orient $\Sigma$ by the tangent vectors $\di_x,\di_y$
in the parameterization \eqref{eq:Sigmaf2}. Every orientation preserving
conformal parameterization $f:\D\to \Sigma$ with $f(0)=\bq$  is then of the form
\begin{equation}\label{eq:extremal}
	f(z)=\left(c\, \Re \frac{e^{\imath t}z+a}{1+\bar a e^{\imath t}z},
	c\, \Im \frac{e^{\imath t}z+a}{1+\bar a e^{\imath t}z},p,0,\ldots,0\right),\quad z\in \D
\end{equation}
for some $t\in\R$. (Here, $\Re$ and $\Im$ stand for the real and imaginary part of a complex number.
If $n=2$ then $p=0$, $c=1$, and the same holds if we drop all coordinates except the first two.
Orientation reversing conformal parameterizations are obtained by replacing $z=x+\imath y$ with 
$\bar z=x-\imath y$. By a rotation in the $(x,y)$-plane, we may further assume that $b_2=0$ and
$f(0)=(b_1,0,p,0,\ldots,0)$; in this case $a\in [0,1)$.
By also allowing rotations on the disc $\D$, we can take $t=0$ in \eqref{eq:extremal}.) 
Using the complex coordinate $x+\imath y$ in the plane $df_0(\R^2)=\R^2\times\{0\}^{n-2}$,
the map \eqref{eq:extremal} can be written in the form 
\[	
	f(z)= \left(c \frac{e^{\imath t}z+a}{1+\bar a e^{\imath t}z},p,0,\ldots,0\right) = (h(z),p,0,\ldots,0).
\]
From \eqref{eq:constants} it follows that
\begin{eqnarray*}
	|h'(0)| &=& c\, (1-|a|^2) = \frac{c^2-c^2 |a|^2}{c}
	         = \frac{1-|\bq|^2\sin^2\theta - |\bq|^2\cos^2\theta}{c} \\
	         &=& \frac{1-|\bq|^2}{\sqrt{1- |\bq|^2\sin^2\theta}}  
	         = \frac{1-|f(0)|^2}{\sqrt{1- |f(0)|^2\sin^2\theta}}.
\end{eqnarray*}
Since $\|df_0\|=|h'(0)|$, this gives equality in \eqref{eq:S} at $z=0$.

Theorem \ref{th:main} now follows immediately from the following lemma.

%
%
\begin{lemma}\label{lem:main}
Let $f:\D\to\B^n$ $(n\ge 2)$ be the disc \eqref{eq:extremal}.  
If $g:\D\to\B^n$ is a harmonic disc such that $g(0)=f(0)$, 
$g$ is conformal at $0$, and $dg_0(\R^2)=df_0(\R^2)$, then $\|dg_0\|\le \|df_0\|$, 
with equality if and only if $g(z)=f(e^{\imath s}z)$ or $g(z)=f(e^{\imath s}\bar z)$ 
for some $s\in\R$ and all $z\in\D$.
\end{lemma}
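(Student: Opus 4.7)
The plan is to translate the problem into a Schwarz--Pick statement for the convex tube domain $T := \B^n + i\R^n \subset \C^n$ and then invoke Lempert's theorem on extremal analytic discs. First, I would form the holomorphic extension $F = g + i\tilde g : \D \to \C^n$, normalised so that $\tilde g(0) = 0$, hence $F(0) = \bq \in \R^n$. Since $g(\D) \subset \B^n$, the map $F$ takes values in the tube $T$, which is convex. Using the coordinates $\bq = (b_1, 0, p, 0, \ldots, 0)$ and $\Lambda = \R^2 \times \{0\}^{n-2}$ set up just before the lemma, the hypothesis $dg_0(\R^2) = \Lambda$ forces $F_k'(0) = 0$ for $k \ge 3$, and conformality of $g$ at $0$ (say orientation-preserving; the reversing case is symmetric) gives $\partial_z g_2(0) = -i\, \partial_z g_1(0)$. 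Setting $\alpha = \partial_z g_1(0)$, this yields
\[
F'(0) = 2\alpha\bw, \qquad \bw = (1, -i, 0, \ldots, 0) \in \C^n, \qquad |\alpha| = \tfrac12 \|dg_0\|.
\]
The same computation applied to the explicit $f$ in \eqref{eq:extremal} gives the holomorphic extension $\Phi$ of $f$ with $\Phi'(0) = \frac{c^2 - b_1^2}{c}\bw$, so the desired inequality $\|dg_0\| \le \|df_0\|$ is equivalent to $|2\alpha| \le (c^2 - b_1^2)/c$.

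The key observation is that this is precisely the Schwarz--Pick bound in the direction $\C\bw$ for the convex tube $T$. The complex-affine slice $\Sigma_\C := (\bq + \C\bw) \cap T$ is a round complex disc of radius $c$: writing $\bq + \lambda\bw$ with $\lambda \in \C$, the constraint $\Re(\bq + \lambda\bw) \in \B^n$ simplifies to $(b_1 + \Re\lambda)^2 + (\Im\lambda)^2 + p^2 < 1$, that is, $|b_1 + \lambda|^2 < c^2$. A conformal parameterisation $\D \to \Sigma_\C$ sending $0$ to $\bq$ is exactly $\Phi$, with derivative $\frac{c^2 - b_1^2}{c}\bw$ at the origin. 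Invoking Lempert's theorem (extended from bounded strongly convex to general convex domains) identifies $\Phi$ as the extremal Kobayashi geodesic of $T$ through $\bq$ in the direction $\C\bw$. Hence every holomorphic $F : \D \to T$ with $F(0) = \bq$ and $F'(0) \in \C\bw$ satisfies $|F'(0)| \le |\Phi'(0)|$, which is the required bound.

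The hard part is justifying Lempert's extremality for the unbounded, merely convex tube $T$. One clean route is to exhibit a holomorphic left inverse, that is a map $\rho : T \to \D$ with $\rho \circ \Phi \in \Aut(\D)$, which forces extremality via Carath\'eodory's pseudometric; another is to exhaust $T$ from within by bounded strongly convex subdomains and pass to the limit in the classical Lempert theorem. For the equality statement, if $\|dg_0\| = \|df_0\|$, then $F$ itself is a Lempert extremal with the same base point and direction as $\Phi$; uniqueness up to reparameterisation by $\Aut(\D)$ fixing $0$ gives $F(z) = \Phi(e^{is}z)$, whence $g(z) = f(e^{is}z)$. The orientation-reversing case is analogous with $\bw = (1, i, 0, \ldots, 0)$ and produces $g(z) = f(e^{is}\bar z)$.
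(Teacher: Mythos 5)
Your setup is correct and is genuinely in the spirit of the paper's argument: the reduction of $\|dg_0\|\le\|df_0\|$ to the statement that the affine complex disc $\Phi$ parameterizing $(\bq+\C\bw)\cap\Tcal_{\B^n}$, $\bw=(1,-\imath,0,\dots,0)$, is the Kobayashi‑extremal disc of the tube $\Tcal_{\B^n}=\B^n\times\imath\R^n$ through $\bq$ in the direction $\C\bw$ is accurate (the computations $F'(0)=2\alpha\bw$, $|\alpha|=\tfrac12\|dg_0\|$, and $|\Phi'(0)|=(c^2-b_1^2)/c$ all check out). But the step you yourself flag as ``the hard part'' is exactly where the entire mathematical content of the lemma lives, and neither of your two routes closes it. Lempert's theorem in its classical form applies to \emph{bounded strongly convex} domains; the tube $\Tcal_{\B^n}$ is unbounded and its boundary contains the affine subspaces $\{\bx\}\times\imath\R^n$, so it is very far from strictly convex, and for non‑strictly‑convex domains affine slices need not be complex geodesics and uniqueness of extremal discs can genuinely fail. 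Your route (a) asserts a holomorphic left inverse $\rho:\Tcal_{\B^n}\to\D$ with $\rho\circ\Phi\in\Aut(\D)$ without constructing it --- no linear functional works, since the tube is unbounded in the imaginary directions, and producing such a $\rho$ is essentially equivalent to the stationary‑disc analysis you are trying to avoid; moreover, even granting $\rho$, equality in Schwarz--Pick only makes $\rho\circ F$ an automorphism and does not by itself force $F=\Phi\circ(\text{rotation})$, so the equality case of the lemma would still be open. Your route (b) goes the wrong way: exhausting $\Tcal_{\B^n}$ by bounded strongly convex subdomains $D_j$ gives $\kappa_{\Tcal_{\B^n}}=\inf_j\kappa_{D_j}$, which bounds the Kobayashi metric of the tube from \emph{above}, whereas extremality of $\Phi$ requires a lower bound; and the extremal discs of the $D_j$ are not known to converge to $\Phi$.

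The paper's proof supplies precisely the missing ingredient by hand rather than by citation: it shows that the complexified affine disc is a \emph{stationary} disc of the tube, exhibiting the explicit holomorphic quadratic map $\tilde f(z)=z|1+\bar a z|^2 f(z)$ built from the outward normal $f(z)$ along $b\D$, and then runs a direct positivity argument --- the function $h(z)=\Re\langle (F(z)-G(z))/z,\tilde f(z)\rangle$ is harmonic on $\D$, nonnegative a.e.\ on $b\D$ by strong convexity of $\B^n$ (this is where strong convexity of the \emph{base}, not of the tube, enters), and its value at $0$ equals $(1-r)\Re\langle F'(0),\tilde f(0)\rangle$ with $\Re\langle F'(0),\tilde f(0)\rangle>0$, yielding both $r\le1$ and the rigidity statement. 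To repair your write‑up you would either need to carry out this stationary‑disc/positivity argument yourself, or invoke the extension of Lempert's theory to tubes over strongly convex bases (Jarnicki--Pflug, Sect.\ 11.1, or the work of Zaj\k{a}c) with a precise statement covering both extremality of affine slices and uniqueness --- which the paper deliberately avoids relying on.
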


The proof of Lemma \ref{lem:main} uses ideas from Lempert's seminal
paper \cite{Lempert1981} concerning complex geodesics of the Kobayashi metric
in convex domains in $\C^n$; see Remark \ref{rem:stationary}.

\begin{proof}
Let $p$, $c$ and $a$ be as in \eqref{eq:constants} related to the 
map $f$ in \eqref{eq:extremal}, where $\bq=f(0)$.
Precomposing $f$ by a rotation in $\C$, we may assume that $t=0$ in \eqref{eq:extremal}.
For simplicity of notation we assume that $n=3$; the proof for $n\ne 3$ is exactly the same.
If $n=2$, we delete the remaining components and take $c=1$. 

Consider the holomorphic disc $F:\D\to \Omega=\B^3\times \imath \R^3$Given by
\begin{equation}\label{eq:Fdisc}
	F(z)=\left(c\, \frac{z+a}{1+\bar a z}, -c\,\imath \frac{z+a}{1+\bar a z},p\right),\quad z\in \D.
\end{equation}
Then, $f=\Re F$. Suppose that $g:\D\to\B^3$ is as in the lemma.
Up to replacing  $g(z)$ by $g(e^{\imath s}z)$ or $g(e^{\imath s}\bar z)$ for a suitable $s\in\R$,
we may assume that
\begin{equation}\label{eq:dfdg}
	dg_0=r df_0 \ \ \text{for some $r>0$}.
\end{equation}
We must prove that $r\le 1$, and that $r=1$ if and only if $g=f$.

Let $G:\D\to\Omega$ be the unique holomorphic map with $\Re G=g$ and $G(0)=F(0)$.
In view of the Cauchy--Riemann equations, condition \eqref{eq:dfdg} implies
\begin{equation}\label{eq:dFdG}
	G'(0) = r F'(0),
\end{equation}
where the prime denotes the complex derivative. It follows that the map $(F(z)-G(z))/z$ 
is holomorphic on $\D$, and its value at $z=0$ equals 
\begin{equation}\label{eq:FGprime}
	\lim_{z\to 0} \frac{F(z)-G(z)}{z} = F'(0) - G'(0) = (1-r)F'(0).
\end{equation}

The bounded harmonic map $g:\D\to \B^3$ has a nontangential boundary value 
at almost every point of the circle $\T=b\D$. Since
the Hilbert transform is an isometry on the Hilbert space $L^2(\T)$, the same is true for 
its holomorphic extension $G$ (see Garnett \cite{Garnett2007}).

Denote by $\langle\cdot,\cdot\rangle$ the complex bilinear form on $\C^n$Given by
$
	\langle z,w\rangle = \sum_{i=1}^n z_i w_i
$
for $z,w\in\C^n$. Note that on vectors in $\R^n$ this is the Euclidean inner product.
For each $z=e^{\imath t}\in b\D$ the vector $f(z)\in b\B^3$ is the unit normal vector
to the sphere $b\B^3$ at the point $f(z)$. Since $\B^3$ is strongly convex and $f$ is real-valued, 
we have that
\begin{equation}\label{eq:convexity}
	\Re\, \big\langle F(z)-G(z), f(z) \big\rangle = 
	\big \langle f(z)-g(z), f(z) \big\rangle \ge 0 \quad {a.e.}\ z\in b\D,
\end{equation}
and the value is positive for almost every $z\in b\D$ if and only if $g\ne f$.
It is at this point that strong convexity of the ball $\B^3$ is used in an essential way.

We now consider the map $\tilde f$ on the circle $b\D$Given by
\begin{equation}\label{eq:tildef}
	\tilde f(z) = z |1+\bar a z|^2 f(z),\quad\ |z|=1.
\end{equation}
An explicit calculation, taking into account $z\bar z=1$, shows that
\begin{equation}\label{eq:tildef2}
	\tilde f(z) =
	\left(
	\begin{matrix}
		\frac{c}{2} \left(1+a^2 + 4(\Re a)z + (1+\bar a^2)z^2 \right) \cr\cr
		\frac{c}{2} \left(\imath(1-a^2) + 4(\Im a)z + \imath (\bar a^2-1)z^2 \right)  \cr\cr
		p\, (z+a)(1+\bar a z)
	\end{matrix}
	\right).
\end{equation}
We extend $\tilde f$ to all $z\in \C$ by letting it equal the quadratic holomorphic polynomial map on the
right hand side above. Since $|1+\bar a z|^2>0$ for $z\in\overline\D$, \eqref{eq:convexity} implies
\begin{eqnarray*}
	h(z) &:=& \Re\, \big\langle F(z)-G(z), |1+\bar a z|^2 f(z)\big\rangle \\
	       &=&  \big\langle f(z)-g(z), |1+\bar a z|^2 f(z) \big\rangle \ge 0 \ \ \ {a.e.}\ z\in b\D,
\end{eqnarray*}
and $h>0$ almost everywhere on $b\D$ if and only if $g\ne f$. From \eqref{eq:tildef} we see that
\begin{equation}\label{eq:h}
	h(z) = \Re \, \left\langle \frac{F(z)-G(z)}{z}, \tilde f(z)\right\rangle \ \  {a.e.}\ z\in b\D
\end{equation}
Since the maps $(F(z)-G(z))/z$ and $\tilde f(z)$ are holomorphic on $\D$, 
the formula \eqref{eq:h} provides an extension of $h$ from $b\D$ to a nonnegative harmonic function 
on $\D$ which is positive on $\D$ unless $f=g$. Inserting the value \eqref{eq:FGprime} into 
\eqref{eq:h} gives
\[
	h(0) =  \Re\, \big\langle F'(0) - G'(0) ,\tilde f(0) \big\rangle
	       =  (1-r) \, \Re\, \big\langle F'(0) ,\tilde f(0)\big\rangle  \ge 0,
\]
with equality if and only if $f=g$.  Applying this argument to the linear map
$g(z)=f(0)+r df_0(z)$ $(z\in\D)$ for a small $r>0$ we get $\Re\, \langle F'(0) ,\tilde f(0)\rangle >0$.
It follows that $r\le 1$, with equality if and only if $g=f$.
\end{proof}

%
%
\begin{remark}
\label{rem:stationary}
The main point in the above proof is that 
a complexification of a conformal proper affine disc in $\B^n$ 
is a {\em stationary disc} in the tube  $\Tcal_{\B^n}=\B^n\times \imath \R^n$.
In Lempert's terminology from \cite{Lempert1981}, a proper holomorphic disc $F:\D\to\Omega$ 
in a smoothly bounded convex domain $\Omega\subset \C^n$,
extending continuously to $\overline \D$, 
is a stationary disc if, denoting by $\nu:b\D\to\C^n$ the unit normal vector field to $b\Omega$ 
along the boundary circle $F(b\D)\subset b\Omega$, there is a positive 
continuous function $q>0$ on $b\D$ such that the function $z \,q(z)\overline{\nu(z)}$
extends from the circle $|z|=1$ to a holomorphic function $\tilde f(z)$ on $\D$.
Lempert showed in \cite{Lempert1981} that every stationary disc $F$ in a bounded strongly convex domain
is the unique Kobayashi extremal disc through the point $F(a)$ in the tangent direction $F'(a)$ 
for every $a\in\D$. In our case, a suitable holomorphic function $\tilde f$ is given by \eqref{eq:tildef} 
and \eqref{eq:tildef2}. Lempert's theory also works on tubes over bounded strongly convex domains  
(see Jarnicki and Pflug \cite[Sect. 11.1]{JarnickiPflug2013}); 
however, our proof of Theorem \ref{th:main} does not depend on this information.
\end{remark}

%
%
\section{Proof of Theorem \ref{th:main2}} \label{sec:proofmain2}

Precomposing the given harmonic map $f:\D\to\B^n$ in Theorem \ref{th:main2}
by a holomorphic automorphism of the disc $\D$, we see that it suffices to prove 
the estimate \eqref{eq:S2} for $z=0$. 

Assume first that $f:\D\to \R$ is a harmonic function on $\D$.
Let $F(z)= a_0 + a_1z+a_2 z^2+\ldots$ be the holomorphic function on $\D$ with $\Re F=f$ and $F(0)=f(0)\in\R$. Writing $z=r e^{\imath t}$ with $0\le r<1$ and $t\in\R$, we have that
\begin{eqnarray*}
	f(r e^{\imath t})^2 &=& \frac{1}{4} \big(a_0+a_1 r e^{\imath t}+ r^2 e^{2\imath t}+\cdots +
	a_0 + \bar a_1 r e^{-\imath t}+ \bar a_2 r^2 e^{-2\imath t}+\cdots\big)^2  \\
	&=& a_0^2 + \frac12 \sum_{k=1}^\infty r^{2k}|a_k|^2 + \cdots,
\end{eqnarray*}
where each of the remaining terms in the series contains a power $e^{m\imath t}$ 
for some $m\in \Z\setminus\{0\}$.
Integrating around the circle $|z|=r$ for $0<r<1$ annihilates all such terms and yields
\[
	\int_0^{2\pi} f(r e^{\imath t})^2 \frac{dt}{2\pi} = a_0^2 + \frac12 \sum_{k=1}^\infty r^{2k}|a_k|^2.
\]
Clearly, $a_0=f(0)$. Writing $z=x+\imath y$, we have that $a_1=F'(0)=F_x(0)=f_x(0)-\imath f_y(0)$
by the Cauchy--Riemann equations. Therefore,
\[
	a_0^2=f(0)^2, \qquad |a_1|^2 = f_x(0)^2 + f_y(0)^2 = |\nabla f(0)|^2,
\]
and hence
\begin{equation}\label{eq:a0a1}
	\int_0^{2\pi} f(r e^{\imath t})^2 \frac{dt}{2\pi} = |f(0)|^2 + \frac12  |\nabla f(0)|^2 r^2
	+  \frac12 \sum_{k=2}^\infty r^{2k}|a_k|^2.
\end{equation}

Suppose now that $f=(f_1,\ldots, f_n):\D\to \B^n$ is a harmonic map. Then,
$\sum_{j=1}^n f_j(r e^{\imath t})^2 <1$ for all $0\le r<1$ and $t\in\R$. Integrating this inequality
and taking into account the identity \eqref{eq:a0a1} for each component $f_j$ of $f$Gives
\[	
	\int_0^{2\pi} |f(r e^{\imath t})|^2 \frac{dt}{2\pi}  = |f(0)|^2 + \frac12  |\nabla f(0)|^2 r^2 +
	 \frac12 \sum_{k=2}^\infty r^{2k}|a_k|^2 <1.
\]
Letting $r$ increase to $1$Gives $|f(0)|^2 + \frac12  |\nabla f(0)|^2 \le 1$, with equality
if and only if all higher order coefficients in the Fourier expansion of $f$ vanish. The latter holds if
and only if $f$ is a linear disc. This gives the estimate \eqref{eq:S2}. 

Note that \eqref{eq:S2} holds if the $L^2$-Hardy norm of $f$ is at most $1$.
This does not necessarily imply that there is a harmonic disc in $\B^n$ reaching equality in
\eqref{eq:S2}. However, equality is reached if $f(0)$ is orthogonal to the $2$-plane $df_0(\R^2)$.
In this case we may assume that $f(0)=(0,0,p,0\ldots,0)$ for some
$0\le p<1$ and $df_0(\R^2)=\R^2\times \{0\}^{n-2}$. The affine disc
\[
	\Sigma = \bigl\{(x,y,p,0,\ldots,0):x^2+y^2< 1-p^2\bigr\}
\]
of radius $c=\sqrt{1-p^2}$ is then orthogonal to $f(0)$, proper in $\B^n$, and its conformal linear 
parameterization $f$ has gradient of size $c\sqrt 2$ at the origin, 
so $|f(0)|^2+ \frac12 |\nabla f(0)|^2=p^2+c^2=1$.
(Compare with \eqref{eq:Sigmaf2} and \eqref{eq:extremal}.)
This completes the proof of Theorem \ref{th:main2}.

We now show by examples that the inequality \eqref{eq:S} fails in general for some nonconformal
harmonic maps, and even for harmonic diffeomorphisms of the disc.

%
%
\begin{example}\label{ex:failure}
Let $U$ be the harmonic function on the disc $\D$Given by
\begin{equation}\label{eq:U}
	U(z) = \Im\, \frac{2}{\pi} \log \frac{1+z}{1-z} =  \frac{2}{\pi} \arctan\frac{2y}{1-x^2-y^2}.
\end{equation}
This is the extremal harmonic function whose boundary value equals $+1$ on the upper unit semicircle
and $-1$ on the lower semicircle, and we have that $\nabla U(0) =  \frac{4}{\pi} (0,1)$
and $|\nabla U(0)| = \frac{4}{\pi}$. For every $c\in\R$ the harmonic map
\[
	f(z)=\frac{1}{\sqrt{1+|c|^2}}  \bigl( c+\imath U(z)\bigr),\quad \ z\in\D
\]
clearly takes the unit disc into itself. For $c=1$ we have $f(0)=\frac{1}{\sqrt2}$,
$\nabla f(0)=\frac{2\sqrt{2}}{\pi} \left(\begin{matrix}0& 0 \cr 0& 1 \end{matrix}\right)$,
\[
	|\nabla f(0)|=\frac{2\sqrt{2}}{\pi} \approx 0.9, \quad
	\sqrt{2}\left(1-|f(0)|^2\right) = \frac{\sqrt{2}}{2} \approx 0.7.
\]
Hence, the inequality \eqref{eq:S} fails in this example. On the other hand, 
$\sqrt{2}\sqrt{1-|f(0)|^2}=1$, so the inequality \eqref{eq:S2} holds, as it should
by Theorem \ref{th:main2}.

With some more effort we can show that the inequality \eqref{eq:S} fails for harmonic diffeomorphisms 
of the unit disc onto itself. Consider the sequence  $\varphi_n$ $(n\in\N)$ 
of orientation-preserving homeomorphisms of the interval $[0,2\pi]$ onto itself, defined by
\[
	\varphi_n(t) =\left\{
                        \begin{array}{ll}
                          \frac{\pi}{2\pi-1/n}t , & \hbox{if $t\in [0,2\pi-1/n]$;} \\
                          2 \left(\pi -n \pi ^2\right)+n \pi  t, & \hbox{if $t\in [2\pi-1/n,2\pi]$.}
                        \end{array}
                      \right.
\]
Let $\phi_n:\T\to\T$ be the associated sequence of homeomorphisms of the circle $\T=b\D$
given by $\phi_n(e^{\imath t}) = e^{\imath \varphi_n(t)}$ for $t\in [0,2\pi]$. Denote by 
\[
	f_n(z) = P[\phi_n](z) = \frac{1}{2\pi} \int_0^{2\pi} 
	\frac{1-|z|^2}{|e^{\imath t}-z|^2} \, \phi_n(e^{\imath t})  dt,\quad z\in\D
\]
the Poisson extension of $\phi_n$. By Rad{\'o}--Kneser--Choquet theorem 
(see \cite[Sect.\ 3.1]{Duren2004}), $f_n$ is a harmonic 
diffemorphism of $\D$ for every $n\in\N$. As $n\to\infty$, the sequence $f_n$ 
converges uniformly on compacts in $\D$ to the harmonic map $f=P[\phi_0](z)$, 
where $\phi_0(e^{\imath t}) =\lim_{n\to\infty} \phi_n(e^{\imath t}) = e^{\imath t/2}$
for $t\in [0,2\pi)$. Further, 
\[
	\lim_{n\to \infty}\frac{|\nabla f_n(0)|}{1-|f_n(0)|^2}=\frac{|\nabla f(0)|}{1-|f(0)|^2}.
\]
A calculation shows that 
\[
	\frac{1}{\sqrt2} \frac{|\nabla f(0)|}{1-|f(0)|^2}=  \frac{\sqrt{|A|^2+|B^2|}}{1-|C|^2},
\]
where
\[
	A= \frac{1}{\pi}\int_0^{2\pi} e^{\frac{\imath t}{2}} \cos t \,dt = -\frac{4\imath}{3 \pi}, 
	\qquad 
	B=\frac{1}{\pi}\int_0^{2\pi} e^{\frac{\imath t}{2}} \sin t\, dt = \frac{8}{3 \pi}, 
\]
and
\[
	C=\frac{1}{2\pi}\int_0^{2\pi} e^{\imath t/2} dt=\frac{2\imath}{\pi}.
\]
Hence, 
\[
	\frac{1}{\sqrt2} \frac{|\nabla f(0)|}{1-|f(0)|^2}
	= \frac{2 \sqrt{10}}{3\pi \left(1-\frac{4}{\pi ^2}\right)}\approx 1.1.
\]
This shows that \eqref{eq:S} fails for harmonic diffeomorphisms of the unit disc onto itself.
\end{example}

\begin{example}\label{ex:square}
Let $U(x,y)$ be the function \eqref{eq:U}. The harmonic map $f(x,y)=(U(y,x),U(x,y))$ takes
the disc $\D$ onto the square $P=\{(x,y)\in\R^2: |x|<1,\ |y|<1\}$ and 
$df_0(0,0)=\frac{4}{\pi}\Id$. In particular, $f$ is conformal at $(0,0)$ and 
$\|df_0\|=4/\pi\approx 1.27$. On the other hand, a conformal diffeomorphism
of $\D$ onto $P$ mapping the origin to itself has the derivative at the origin
of absolute value $\approx 1.08$. Hence, the Schwarz--Pick lemma in 
Theorem  \ref{th:SP} fails for maps from the disc to more general (convex) domains in $\C$.
\end{example}

\begin{problem}\label{prob:disc}
Assume that $D\subsetneq \R^2$ is a simply connected domain such that, for some point $p\in D$,
the supremum of the norm $\|df_0\|$ of the differential of $f$ at $0\in\D$ 
over all harmonic maps $f:\D\to D$ with $f(0)=p$ which are conformal
at $0$ is reached by a conformal diffeomorphism of $\D$ onto $D$.
Does it follow that $D$ is a disc?
\end{problem}

%
%
\section{A Schwarz--Pick lemma for quasiconformal harmonic maps}\label{sec:Beltrami}
In this section we apply the Schwarz--Pick lemma for harmonic self-maps of the disc,
given by Theorem \ref{th:SP}, to provide an estimate of the gradient of a harmonic map
$f:\D\to\D$ in terms of its {\em second Beltrami coefficient}
\begin{equation}\label{eq:SBC}
	\omega(z) = \frac{\overline{(f_{\bar z})}}{f_z},\quad\ z\in\D.
\end{equation}
Here, $f_z=\frac12 (f_x-\imath f_y)$ and $f_{\bar z}=\frac12 (f_x+\imath f_y)$. 
If the map $f$ is harmonic then $\omega$ is a holomorphic function (see \eqref{eq:hgomega}). 
This is not the case for the Beltrami coefficient $\mu$ from the
Beltrami equation $f_{\bar z} = \mu (z) f_z$. The number $|\mu(z)|=|\omega(z)|$
measures the dilatation of $df_z$; in particular, $\mu(z)=\omega(z)=0$
if and only if $f$ is conformal at $z$. We refer to Ahlfors  \cite{Ahlfors2006},
Duren \cite{Duren2004}, Lehto and Virtanen \cite{LehtoVirtanen1973}, and 
Hengartner and Schober \cite{HengartnerSchober1986} for background on the theory
of quasiconformal maps.

The main question is to find the optimal estimate on 
$|\nabla f(0)|$ for a harmonic map $f:\D\to\D$ with $f(0)=0$ and with a given value
of $|\omega(0)|=|\mu(0)|$.  A related problem was studied by Kovalev and Yang 
\cite{KovalevYang2020} and Brevig et al.\ \cite{BrevigOrtegaSeip2021}, where 
the reader can find references to earlier works. Here we prove the following result.

%
%
\begin{theorem}\label{th:Beltrami}
Assume that $f$ is an orientation preserving harmonic map of the unit disc into itself, and let
$\omega(z)$ denote its second Beltrami coefficient \eqref{eq:SBC}.
Then we have the inequality
\begin{eqnarray*}
	\frac{1}{\sqrt2} |\nabla f(z)| &\le& \frac{2\Re (\omega(z) f(z)^2)}{(1-|\omega(z)|^2)(1-|z|^2)} +
	\frac{1+|\omega(z)|^2}{1-|\omega(z)|^2}\frac{1-|f(z)|^2}{1-|z|^2},\quad z\in \D.
\end{eqnarray*}
\end{theorem}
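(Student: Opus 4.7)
The plan is to reduce the inequality to the origin by M\"obius invariance and then to apply Theorem~\ref{th:SP} to an auxiliary harmonic self-map of $\D$ obtained by a real-linear modification of $f$ designed to cancel the anti-holomorphic part of $df_0$, so as to make this auxiliary map conformal at the origin.

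First I would precompose $f$ with $\phi_z\in\Aut(\D)$ sending $0$ to $z$. Both sides of the claimed inequality transform covariantly: $|\omega|$ is preserved (the Beltrami coefficient picks up only a unimodular twist from $\phi_z'(0)/\overline{\phi_z'(0)}$), the quantity $\Re(\omega f^2)$ is preserved after absorbing the twist into $f^2$, and the denominator $(1-|z|^2)$ arises from $|\phi_z'(0)|$ via the chain rule. So it suffices to prove the estimate at $z=0$. Write $f=h+\bar g$ with $h,g$ holomorphic on $\D$; then the second Beltrami coefficient is $\omega=g'/h'$, which is holomorphic (this is the reason why the hypothesis of harmonicity enters decisively), and $\tfrac{1}{\sqrt 2}|\nabla f|^2=|h'|^2+|g'|^2$. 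Setting $a:=\omega(0)$, one has $\tfrac{1}{\sqrt2}|\nabla f(0)|=|h'(0)|\sqrt{1+|a|^2}$.

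Next I would introduce the auxiliary harmonic map
\[
 F(z):=f(z)-\bar a\,\overline{f(z)}= \bigl(h(z)-\bar a\, g(z)\bigr) + \overline{\bigl(g(z)-a\, h(z)\bigr)}.
\]
A direct computation gives $F_z(0)=h'(0)(1-|a|^2)$ and $F_{\bar z}(0)=0$, so $F$ is orientation-preserving and conformal at the origin. Since $|F(z)|\le(1+|a|)|f(z)|<1+|a|$ for every $z\in\D$, the rescaling $\tilde F:=F/(1+|a|)$ is a harmonic self-map of $\D$, still conformal at $0$, and Theorem~\ref{th:SP} yields
\[
 (1-|a|)\,|h'(0)|\;=\;\|d\tilde F_0\|\;\le\;1-|\tilde F(0)|^2.
\]
Combined with the identity $|F(0)|^2=(1+|a|^2)|f(0)|^2-2\Re(\omega(0)f(0)^2)$ and the passage $|h'(0)|\mapsto|h'(0)|\sqrt{1+|a|^2}=\tfrac{1}{\sqrt2}|\nabla f(0)|$, this gives a bound of the desired shape.

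The main obstacle is purely algebraic: the naive step above yields the numerator $(1+|a|)^2-|F(0)|^2$ and carries extra factors of $(1+|a|)$ and $\sqrt{1+|a|^2}$, while the target numerator is $(1+|\omega|^2)(1-|f|^2)+2\Re(\omega f^2)=(1+|a|^2)-|F(0)|^2$ with the clean denominator $1-|\omega|^2$. To close this gap one must replace the crude bound $|F|<1+|a|$ by the sharper observation that the real-linear map $w\mapsto w-\bar a\bar w$ sends $\D$ onto the ellipse $\{(1+|a|^2)|\zeta|^2+2\Re(a\zeta^2)<(1-|a|^2)^2\}$ with semi-axes $1\pm|a|$; the image of $F$ therefore lies in this ellipse, not merely in the disc of radius $1+|a|$. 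Applying Theorem~\ref{th:SP} to $F$ after post-composition with the (affine, hence harmonicity-preserving) inverse of this real-linear map, or equivalently optimizing the normalizing constant in the direction of $F(0)$, should remove the spurious factor $(1+|a|)/\sqrt{1+|a|^2}$ and the excess $2|a|$ in the numerator, delivering the theorem's precise inequality.
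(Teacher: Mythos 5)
Your reduction to $z=0$, the decomposition $f=h+\bar g$, and the auxiliary map $F=f-\overline{\omega(0)}\,\bar f$ killing $F_{\bar z}(0)$ are exactly the paper's strategy: the paper's auxiliary map is $\tilde f=(\bar a f-\bar b\bar f)/\sqrt{|a|^2+|b|^2}$ with $a,b$ the derivatives of the holomorphic and antiholomorphic parts at $0$, which is precisely your $F/\sqrt{1+|\omega(0)|^2}$ up to a unimodular constant, and your identity $|F(0)|^2=(1+|\omega(0)|^2)|f(0)|^2-2\Re(\omega(0)f(0)^2)$ is the same algebra that produces the right-hand side there. The genuine gap is the one you flag yourself, and neither of your proposed repairs closes it. Post-composing $F=L_a\circ f$, where $L_a(w)=w-\bar a\bar w$, with an affine map carrying the ellipse $L_a(\D)$ onto $\D$ is circular: since $L_a(\D)$ and $\D$ are centrally symmetric, any such affine map is $U\circ L_a^{-1}$ with $U$ orthogonal, so the composition is just $U\circ f$, which is no longer conformal at $0$, and Theorem \ref{th:SP} cannot be applied to it. As for ``optimizing the normalizing constant'', the constant you need is $\sqrt{1+|a|^2}$, and using it requires proving $\sup_\D|F|\le\sqrt{1+|a|^2}$, i.e.\ $(1+|a|^2)|f(w)|^2-2\Re\bigl(a f(w)^2\bigr)\le 1+|a|^2$ for all $w\in\D$; this does not follow from $|f|<1$ pointwise, because at points where $f(w)$ approaches a boundary value $e^{\imath\theta}$ with $ae^{2\imath\theta}$ close to $-|a|$ the left side approaches $(1+|a|)^2>1+|a|^2$. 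So what your argument actually establishes is only the strictly weaker estimate carrying the extra factor $(1+|a|)/\sqrt{1+|a|^2}>1$, not the stated theorem.

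For comparison, the paper takes exactly the normalization you identify as the required one, asserts via the Cauchy--Schwarz inequality that the resulting map $\tilde f$ sends $\D$ into itself, and then applies Theorem \ref{th:SP}; the rest of the computation coincides with yours. The entire content of the theorem beyond your weaker bound is therefore concentrated in the containment $|\tilde f|<1$ on all of $\D$, which your write-up neither proves nor circumvents. To complete the argument you must either supply a genuine proof of that containment, or replace the appeal to Theorem \ref{th:SP} by a version of the inequality $\sqrt2^{-1}|\nabla\tilde f(0)|\le 1-|\tilde f(0)|^2$ that does not require $\tilde f(\D)\subset\D$ (for instance an $L^2$ argument on circles in the spirit of the proof of Theorem \ref{th:main2}); the linear-algebra reshuffling you sketch cannot do this on its own.
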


If $f$ is conformal at a point $z$, i.e. $\omega(z) = 0$, this coincides
with the Schwarz--Pick inequality \eqref{eq:SP} in Theorem \ref{th:SP}.
The estimate is not sharp in general. For example, if $f(0)=0$ then the left hand side 
is at most $1$ by Theorem \ref{th:main2}, but the right hand side is at least $1$
and equals $1$ only if $f$ is conformal at $0$. Hence, the inequality trivially holds 
in this case. However, it is nontrivial at points where $f(z)\ne 0$.

\begin{proof}
It suffices to prove the inequality in the theorem for $z=0$. For other points, we obtain it 
replacing $f$ by $f\circ \phi_z$ for $\phi_z\in \Aut(\D)$. However, we cannot reduce to 
the case $f(0)=0$ since postcompositions by automorphisms of $\D$ are not allowed.
The main idea is to construct from $f$ a new harmonic map $\tilde f:\D\to\D$ which is conformal at $0$,
to which we then apply the Schwarz--Pick lemma given by Theorem \ref{th:SP}.

Let us write $f = g + \overline{h}$ where $g$ and $h$ are holomorphic functions on $\D$. Then, 
\begin{equation}\label{eq:hgomega}
	f_z(z)=g'(z), \quad f_{\bar z}(z)=\overline{h'(z)}, \quad \omega(z)=h'(z)/g'(z). 
\end{equation}
We see in particular that the second Beltrami coefficient $\omega$ is holomorphic. It follows that 
\begin{equation}\label{eq:gradcomplex}
	|\nabla f|^2 = |f_x|^2+|f_y|^2 = 2 \left(|f_z|^2+|f_{\bar z}|^2\right)
	= 2 \left( |g'|^2 + |h'|^2\right) = 2|g'|^2 (1+|\omega|^2).	
\end{equation}
Since $f$ is sense preserving, we have that $|g'(z)|\ge |h'(z)|$. Let $a=g'(0)$ and $b= h'(0)$. 
The Cauchy--Schwarz inequality shows that the complex harmonic function
\[
	 \tilde f(z)  = \frac{\bar a f-\bar b \bar f}{\sqrt{|a|^2+|b|^2}},\quad\ z\in \D
\]
maps the unit disc into itself. We have $\tilde f = \tilde g + \overline{ \tilde h}$, where
\[
	\tilde g=\frac{\bar a g-\bar b h}{\sqrt{|a|^2+|b|^2}}\quad
	\text{and}\quad
	\tilde h=\frac{ a h-b g}{\sqrt{|a|^2+|b|^2}}
\]
are holomorphic functions. Since
\begin{equation}\label{eq:tildehzero}
	\tilde h'(0)= \frac{ a h'(0)- b g'(0)}{\sqrt{|a|^2+|b|^2}}=0,
\end{equation}
$\tilde f$ is conformal at $z=0$. Our Schwarz--Pick lemma (see Theorem \ref{th:SP}) gives
\begin{equation}\label{eq:Sbis3}
	\sqrt{2}^{-1} |\nabla \tilde f(0)|  \le  1-|\tilde f(0)|^2.
\end{equation}
Taking into account \eqref{eq:gradcomplex} and \eqref{eq:tildehzero} we have that 
\[
	\begin{split}
	\sqrt{2}^{-1} |\nabla \tilde f(0)| 
	&= \sqrt{|\tilde f_z(0)|^2+|\tilde f_{\bar z}(0)|^2}
	=\sqrt{|\tilde g'(0)|^2+|\tilde h'(0)|^2}\\
	&= |\tilde g'(0)|= \frac{|a|^2-|b^2|}{\sqrt{|a|^2+|b|^2}} 
	= \frac{|g'(0)|^2-|h'(0)|^2}{\sqrt{|g'(0)|^2+|h'(0)|^2}}.
	\end{split}
\]
Together with \eqref{eq:Sbis3} this gives the estimate
\begin{eqnarray*}
	\frac{|g'(0)|^2-|h'(0)|^2}{\sqrt{|g'(0)|^2+|h'(0)|^2}}
	&\le&   1-\frac{|g'(0)\overline{f(0)}-h'(0)f(0)|^2} {|g'(0)|^2+|h'(0)|^2} \\
	&\le& 
	\frac{2\Re\left(g'(0)\overline{h'(0)}\overline{f(0)^2}\right) }{|g'(0)|^2+|h'(0)|^2} + 1-|f(0)|^2.
\end{eqnarray*}
In view of \eqref{eq:hgomega}, this inequality can be written in the form
\[
	\frac{1-|\omega(0)|^2}{\sqrt{1+|\omega(0)|^2}} \, |g'(0)|\le
	\frac{2\Re (\omega(0) f(0)^2)}{1+|\omega(0)|^2}+1-|f(0)|^2.
\]
From \eqref{eq:gradcomplex} we see that 
\[
	|g'(0)|= \frac{|\nabla f(0)|}{\sqrt2 \sqrt{1+|\omega(0)|^2}}.  
\]
Inserting this into the expression on the left hand side of the previous inequality gives
\[
	 \frac{|\nabla f(0)|}{\sqrt2} \frac{1-|\omega(0)|^2}{1+|\omega(0)|^2} 
	 \le \frac{2\Re (\omega(0) f(0)^2)}{1+|\omega(0)|^2}+1-|f(0)|^2.
\]
which is clearly equivalent to
\[
	 \frac{1}{\sqrt2} |\nabla f(0)| \le \frac{2\Re (\omega(0) f(0)^2)}{1-|\omega(0)|^2}
	 + \frac{1+|\omega(0)|^2}{1-|\omega(0)|^2} \left(1-|f(0)|^2\right).
\]
This completes the proof.
\end{proof}

%
%
\section{An intrinsic pseudometric defined by conformal harmonic discs} \label{sec:hyperbolicity}
In this section we introduce an intrinsic Finsler pseudometric $g_D$ on any domain $D$ in $\R^n,\ n\ge 3$,  
and more generally on any Riemannian manifold of dimension at least three, in terms of conformal
minimal discs $\D\to D$. The definition is modelled on Kobayashi's definition of his pseudometric 
on complex manifolds, which uses holomorphic discs. The pseudometric $g_D$ and 
the associated pseudodistance $\rho_D:D\times D\to\R_+$ are the largest ones 
having the property that any conformal harmonic map $M\to D$ from a hyperbolic
conformal surface with the  Poincar\'e metric is metric- and distance-decreasing.
On the ball $\B^n$, $g_{\B^n}$ coincides with the Cayley--Klein metric 
(see Theorem \ref{th:metricsagree}). 
The same definition of $g_D$ applies in any Riemannian manifold of dimension at least three;
see Remark \ref{rem:Riemannianmanifolds}.
This provides the basis for hyperbolicity theory of domains in Euclidean spaces and,
more generally, of Riemannian manifolds, in terms of minimal surfaces. 

We begin by introducing a Finsler pseudometric on the bundle of $2$-planes over 
a domain $D\subset\R^n$, analogous to the metric $\Mcal$ on the ball; see \eqref{eq:Mcal}. 

A {\em conformal frame} in $\R^n$ is a pair $(\bu,\bv)\in \R^n\times \R^n$ 
such that $|\bu|=|\bv|$ and $\bu\,\cdotp \bv=0$. We denote by $\CF_n$ the space of all
conformal frames on $\R^n$, including $(\zero,\zero)$. Given a domain $D\subset \R^n$, 
let $\CH(\D,D)$ denote the space of conformal harmonic maps $\D\to D$ 
(i.e., \eqref{eq:conformal} holds at every point of $\D$). 
Define the function $\Mcal_D: D\times \CF_n\to\R_+$ by 
\begin{equation}\label{eq:minmetric}
	\Mcal_D(\bx,(\bu,\bv)) = \inf \bigl\{ 1/r  : 
	\exists f\in \CH(\D,D),\ f(0)=\bx,\ f_x(0) = r\bu,\ f_y(0) = r\bv\}.
\end{equation}
Clearly, $\Mcal_D$ is homogeneous and rotation-invariant, in the sense that 
for any $c\in\R$ and orthogonal rotation $R$ in the $2$-plane $\Lambda=\span\{\bu,\bv\}$ we have 
for every $\bx\in D$ that 
\[
	\Mcal_D(\bx,(c\bu,c\bv)) = |c| \Mcal_D(\bx,(\bu,\bv)),\quad\ 
	\Mcal_D(\bx,(R\bu,R\bv)) = \Mcal_D(\bx,(\bu,\bv)).
\]
Thus, $\Mcal_D$ is determined by its values on unitary conformal frames $(\bu,\bv)$
with $|\bu|=|\bv|=1$, and hence on $D\times  G_2(\R^n)$ where $G_2(\R^n)$ is the Grassmann 
manifold of $2$-planes in $\R^n$. Precisely, for a $2$-plane $\Lambda\in  G_2(\R^n)$ 
we set $\Mcal_D(\bx,\Lambda)=\Mcal_D(\bx,(\bu,\bv))$, where $(\bu,\bv)$ is any
unitary conformal frame spanning $\Lambda$. Note that
\begin{equation}\label{eq:McalL}
	\Mcal_D(\bx,\Lambda) = \inf \bigl\{ 1/\|df_0\|  : f\in \CH(\D,D),\ f(0)=\bx,\ df_0(\R^2)=\Lambda\}.
\end{equation}
By shrinking the disc $\D$ and using rotations and translations on $\R^n$, 
we see that the function $\Mcal_D$ is upper semicontinuous on $D\times \CF_n$. 
Obviously, $\Mcal_{\R^n}\equiv 0$. On the ball $\B^n$, $\Mcal_{\B^n}(\bx,\Lambda)$ is given by 
\eqref{eq:Mcal} according to Theorem \ref{th:main}. 

We also introduce a Finsler pseudometric $g_D:D\times \R^n\to\R_+$, called the
{\em minimal metric} on $D$, whose value at a point $\bx\in D$ on a tangent vector 
$\bu\in T_\bx D=\R^n$ is given by 
\begin{eqnarray}\label{eq:gmin1}
	g_D(\bx,\bu) &=& \inf\bigl\{1/r>0: \exists f\in\CH(\D,D),\ f(0)=\bx,\ f_x(0)=r\bu\bigr\} \\
	&=& |\bu| \,\cdotp \inf\big\{ \Mcal_D(\bx,\Lambda): \Lambda\in  G_2(\R^n),\ \bu\in\Lambda \big\}.
	\label{eq:gmin2}
\end{eqnarray}
It follows that every conformal harmonic map $f:\D\to D$ satisfies 
\begin{equation}\label{eq:metricdecreasing2}
	g_D\bigl(f(z),df_z(\xi)\bigr) \ \le\ \Pcal(z,\xi) = \frac{|\xi|}{1-|z|^2},  \quad\ z\in \D,\ \xi\in\R^2,
\end{equation}
and $g_D$ is the biggest pseudometric on $D$ with this property.
For $z=0$ this follows directly from the definition, and for any other point $z\in \D$ 
we precompose $f$ by a conformal automorphism of $\D$ mapping $0$ to $z$. 
The same holds if $\D$ is replaced by any hyperbolic conformal surface
(see the proof of Theorem \ref{th:metricdecreasing}).

By integrating $g_D$ we get the {\em minimal pseudodistance} 
$\rho_D:\Omega\times \Omega\to \R_+$:  
\begin{equation}\label{eq:rho}
	\rho_D(\bx,\by)= \inf_\gamma \int_0^1 g_D(\gamma(t),\dot\gamma(t)) \, dt,
	\quad\ \bx,\by\in\Omega.
\end{equation}
The infimum is over all piecewise smooth paths $\gamma:[0,1]\to\Omega$ with $\gamma(0)=\bx$
and $\gamma(1)=\by$. Obviously, $\rho_\Omega$ satisfies the triangle inequality, but it need not
be a distance function. In particular, $\rho_{\R^n}$ vanishes identically.

There is another natural procedure to obtain the pseudodistance $\rho_D$ in \eqref{eq:rho}, 
which is motivated by Kobayashi's definition of his pseudodistance on 
complex manifolds in \cite{Kobayashi1967}. 
Fix a pair of points $\bx,\by\in D$. To any finite chain of conformal harmonic 
discs $f_i:\D\to D$ and points $a_i\in \D$ $(i=1,\ldots,k)$ such that 
\begin{equation}\label{eq:chain}
	f_1(0)=\bx,\quad f_{i+1}(0)=f_i(a_i)\ \text{for $i=1,\ldots,k-1$},\quad f_k(a_k)=\by
\end{equation}
we associate the number 
\[
	\sum_{i=1}^k \frac{1}{2}\log \frac{1+|a_i|}{1-|a_i|}\ge 0.
\]
The $i$-th term in the sum is the Poincar\'e distance from $0$ to $a_i$ in $\D$.
The pseudodistance $\rho_D(\bx,\by)$ is defined to be the infimum of the numbers
obtained in this way. The proof that the two definitions yield the same result
is similar to the one given for the Kobayashi pseudodistance by Royden \cite[Theorem 1]{Royden1971};
see \cite[Theorem 3.1]{DrinovecForstneric2021X} for the details.

The following proposition says that the minimal pseudodistance $\rho_D$Gives as upper bound  
for growth of conformal minimal surfaces in the domain $D$.

%
%
\begin{proposition}\label{prop:distancedecreasing}
Every conformal harmonic map $M\to D$ from a hyperbolic conformal surface is distance-decreasing
in the Poincar\'e distance on $M$ and the pseudodistance $\rho_D$, and 
$\rho_D$ is the biggest pseudodistance on $D$ for which this holds. 
\end{proposition}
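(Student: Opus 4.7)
The plan is to prove the two assertions separately, using the equivalence of the two definitions of $\rho_D$ (via integration of $g_D$ and via chains of conformal harmonic discs) noted by the authors after \eqref{eq:chain}. Each definition is convenient for one of the two assertions.

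For the distance-decreasing property, I first handle $M=\D$. Let $f: \D \to D$ be conformal harmonic and $\gamma: [0,1] \to \D$ a piecewise smooth path from $z$ to $w$. At each $t$, applying \eqref{eq:metricdecreasing2} to the conformal harmonic map obtained by precomposing $f$ with a conformal automorphism of $\D$ sending $0$ to $\gamma(t)$ yields
\[
g_D\bigl(f(\gamma(t)), df_{\gamma(t)}(\dot\gamma(t))\bigr) \ \le\ \Pcal_\D(\gamma(t), \dot\gamma(t)).
\]
Integrating in $t$ and taking the infimum over $\gamma$ gives $\rho_D(f(z), f(w)) \le \dist_\Pcal(z, w)$. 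For a general hyperbolic conformal surface $M$, I proceed exactly as in the proof of Theorem \ref{th:metricdecreasing}: pass to the universal conformal cover $h: \D \to M$ (first to the oriented double cover if $M$ is nonorientable), and for $p, q \in M$ with preimages $z \in h^{-1}(p)$ and $w \in h^{-1}(q)$, apply the previous case to $f \circ h$. Since $h$ is a local isometry of the Poincaré metrics, minimizing over $w \in h^{-1}(q)$ gives the desired distance-decreasing inequality.

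For the maximality claim I use the chain formulation. Let $\rho'$ be any pseudodistance on $D$ that is distance-decreasing for all conformal harmonic maps from hyperbolic conformal surfaces. Given $\bx, \by \in D$ and any chain $(f_i, a_i)$, $i=1,\dots,k$, of conformal harmonic discs in $D$ satisfying \eqref{eq:chain}, applying the distance-decreasing property of $\rho'$ to each $f_i: \D \to D$ yields
\[
\rho'(f_i(0), f_i(a_i)) \ \le\ \dist_\Pcal(0, a_i) \ =\ \tfrac{1}{2}\log\frac{1+|a_i|}{1-|a_i|}.
\]
The triangle inequality for $\rho'$ together with \eqref{eq:chain} then gives $\rho'(\bx, \by) \le \sum_{i=1}^k \tfrac{1}{2}\log\frac{1+|a_i|}{1-|a_i|}$, and infimizing over all admissible chains yields $\rho' \le \rho_D$.

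The main technical subtlety lies in the passage from the infinitesimal inequality to its integrated form: since $g_D$ is only upper semicontinuous (being an infimum), one must verify measurability of $t \mapsto g_D(f(\gamma(t)), df_{\gamma(t)}(\dot\gamma(t)))$ along smooth paths and check that the two definitions of $\rho_D$ actually agree. This is exactly the content of the reference \cite[Theorem 3.1]{DrinovecForstneric2021X}, and once that equivalence is invoked both directions above follow essentially by unwinding definitions.
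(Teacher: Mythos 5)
Your proof is correct, and your maximality half is the same chain argument as the paper's. For the distance-decreasing half you take a mildly different route: you integrate the infinitesimal inequality \eqref{eq:metricdecreasing2} along paths, which is why you must worry about measurability of the upper semicontinuous integrand and explicitly invoke the equivalence of the integral definition \eqref{eq:rho} with the chain definition. The paper sidesteps all of this by working with the chain definition from the outset: the single disc $g=f\circ h$ with $g(0)=f(p)$ and $g(b)=f(q)$ is itself an admissible chain of length one in \eqref{eq:chain}, so $\rho_D(f(p),f(q))\le \frac12\log\frac{1+|b|}{1-|b|}$ is immediate, and infimizing over $b\in h^{-1}(q)$ finishes. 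Since both halves of the proposition ultimately lean on the (cited, not reproved) equivalence of the two definitions, neither argument is more self-contained than the other; yours is simply longer where a one-line observation suffices. Your measurability remark is sound --- upper semicontinuity of $g_D$ makes the integrand Borel along a piecewise smooth path, and it is dominated by $\Pcal_\D(\gamma,\dot\gamma)$ --- but the paper's route renders it unnecessary.
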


\begin{proof} 
Let $M$ be a hyperbolic conformal surface and $h:\D\to M$ be a conformal universal covering.
Choose a conformal harmonic map $f:M\to  D$ and a pair of points $p,q\in M$. 
Let $a,b\in\D$ be such that $h(a)=p$ and $h(b)=q$. Precomposing $h$ by an automorphism 
of the disc, we may assume that $a=0$. Then, $g:=f\circ h:\D\to D$ is a conformal harmonic 
disc with $g(0)=f(p)$ and $g(b)=f(q)$, and it follows from the definition of $\rho_D$ that 
\[
	\rho_D(f(p),f(q)) \ =\ \rho_D(g(0),g(b)) \ \le\ \frac12 \log\frac{1+|b|}{1-|b|}. 
\]
The infimum of the right hand side over all points $b\in\D$ with $h(b)=q$ equals 
the Poincar\'e distance between $p$ and $q$ in $M$, so we see that $f$ is distance-decreasing.

Suppose now that $\tau$ is a pseudodistance on $D$ such that every conformal harmonic
map $\D\to D$ is distance-decreasing with the Poincar\'e metric on $\D$. Let 
$f_i:\D\to D$ and $a_i\in \D$ for $i=1,\ldots,k$ be a chain as in \eqref{eq:chain} 
connecting the points $\bx,\by\in D$. Then, 
\[
	\tau(\bx,\by) \ \le\ \sum_{i=1}^k \tau(f_i(0),f_i(a_i)) \ \le\  
	\sum_{i=1}^k \frac{1}{2}\log \frac{1+|a_i|}{1-|a_i|}.
\]
Taking the infimum over all such chains gives $\tau(\bx,\by)\le \rho_D(\bx,\by)$.
\end{proof}

We have already observed that on the ball $\B^n$ $(n\ge 3)$, the Finsler metric 
$\Mcal_{\B^n}$ is given by \eqref{eq:Mcal}. 
From \eqref{eq:min} and \eqref{eq:gmin2} it follows that   
$g_{\B^n}$ equals the Cayley--Klein metric $\CK$ \eqref{eq:CK}:

%
%
\begin{theorem}\label{th:metricsagree}
On the ball $\B^n,\ n\ge 3,$ we have that
\[
	g_{\B^n} = \CK,\qquad \rho_{\B^n} = \dist_\CK.
\]
\end{theorem}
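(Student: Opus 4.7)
The plan is to reduce everything to Theorem \ref{th:main} and the explicit extremal maps \eqref{eq:extremal}. The core is to show that the Finsler quantity $\Mcal_{\B^n}$ defined on the $2$-plane bundle via conformal harmonic discs (see \eqref{eq:McalL}) coincides with the metric $\Mcal$ given by \eqref{eq:Mcal}; once this identification is made, the equality $g_{\B^n}=\CK$ is immediate from the two characterizations \eqref{eq:min} and \eqref{eq:gmin2}, and the equality of the induced pseudodistances follows.

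First I would prove the inequality $\Mcal_{\B^n}(\bx,\Lambda)\ge \Mcal(\bx,\Lambda)$ for every $\bx\in\B^n$ and $\Lambda\in G_2(\R^n)$. Given any conformal harmonic disc $f\in\CH(\D,\B^n)$ with $f(0)=\bx$ and $df_0(\R^2)=\Lambda$, Theorem \ref{th:main} applied at $z=0$ yields
\begin{equation*}
	\|df_0\| \ \le\ \frac{1-|\bx|^2}{\sqrt{1-|\bx|^2\sin^2\theta}} \ =\ \frac{1}{\Mcal(\bx,\Lambda)},
\end{equation*}
where $\theta$ is the angle between $\bx$ and $\Lambda$. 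Taking the infimum over such $f$ in \eqref{eq:McalL}, i.e.\ the supremum of $\|df_0\|$, gives the desired bound.

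Next I would establish the reverse inequality by exhibiting an extremal disc. The calculation preceding Lemma \ref{lem:main} (see \eqref{eq:extremal} and the equality case of Theorem \ref{th:main}) produces, for every $\bx\in\B^n$ and every $2$-plane $\Lambda\subset\R^n$, a conformal diffeomorphism $f:\D\to \Sigma=(\bx+\Lambda)\cap\B^n$ with $f(0)=\bx$, $df_0(\R^2)=\Lambda$, and $\|df_0\|=1/\Mcal(\bx,\Lambda)$. This $f$ lies in $\CH(\D,\B^n)$ and witnesses $\Mcal_{\B^n}(\bx,\Lambda)\le \Mcal(\bx,\Lambda)$. Combined with the previous step we conclude $\Mcal_{\B^n}=\Mcal$ on $\B^n\times G_2(\R^n)$. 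Inserting this into \eqref{eq:gmin2} and applying \eqref{eq:min} yields
\begin{equation*}
	g_{\B^n}(\bx,\bu) \ =\ |\bu|\cdot \inf_{\bu\in\Lambda} \Mcal_{\B^n}(\bx,\Lambda)
	\ =\ |\bu|\cdot \inf_{\bu\in\Lambda}\Mcal(\bx,\Lambda) \ =\ \CK(\bx,\bu),
\end{equation*}
which is the first equality in the theorem.

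Finally, for $\rho_{\B^n}=\dist_\CK$ I would observe that both pseudodistances are obtained by infimizing $\int_0^1 g(\gamma(t),\dot\gamma(t))\,dt$ over piecewise smooth paths $\gamma$ joining two given points in $\B^n$, the only difference being the Finsler integrand ($g_{\B^n}$ versus $\CK$). Having just shown these integrands agree pointwise on $\B^n\times\R^n$, the two pseudodistances coincide. The main obstacle, realized already in the proof of Theorem \ref{th:main}, is the extremal construction of the conformal harmonic disc achieving $\|df_0\|=1/\Mcal(\bx,\Lambda)$; but that work has already been done and can be invoked directly here.
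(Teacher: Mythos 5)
Your proposal is correct and follows essentially the same route as the paper: the paper likewise identifies $\Mcal_{\B^n}$ with the explicit formula \eqref{eq:Mcal} by combining the inequality of Theorem \ref{th:main} with the extremal affine discs \eqref{eq:extremal}, and then deduces $g_{\B^n}=\CK$ from \eqref{eq:min} and \eqref{eq:gmin2}, with the distance equality following by integration. Your write-up simply makes explicit the two-sided comparison that the paper compresses into the remark ``$\Mcal_{\B^n}(\bx,\Lambda)$ is given by \eqref{eq:Mcal} according to Theorem \ref{th:main}.''
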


%
%
In 1985, Hilbert \cite{Hilbert1885} defined a metric on any convex domain in $\RP^n$ 
that generalizes the Cayley--Klein metric on the ball.
Hilbert metrics are examples of projectively invariant metrics 
which have been studied by many authors; see the surveys by Kobayashi
\cite{Kobayashi1977,Kobayashi1984} and Goldman \cite{Goldman2019}.
Kobayashi discussed the analogy between his metric and Hilbert's metric in \cite{Kobayashi1977}.
An explicit connection was established by Lempert who wrote about it
in \cite{Lempert1987}, and then in \cite[Theorem 3.1]{Lempert1993} proved that
the Hilbert metric $\Hcal_D$ on any bounded convex domain $D\subset \R^n$ is the restriction to $D$
of the Kobayashi metric on the elliptic tube $D^*\subset D\times \imath\R^n\subset \C^n$ 
obtained as follows (see \cite[p.\ 441]{Lempert1993}). Every affine line segment $L\subset D$ 
with endpoints on $bD$ is the diameter of a unique complex disc in $D\times \imath\R^n$,
and $D^*$ is the union of all such discs. The elliptic tube over the ball $\B^n$ is the complex ball 
$\B^n_\C$, and the metric $g_{\B^n}$ agrees with the Hilbert metric 
$\Hcal_{\B^n}=\CK$ according to Theorem \ref{th:metricsagree}.

We now give an example of a strongly convex domain $D \subset\R^3$ (an ellipsoid)  
on which the minimal metric $g_D$ does not coincide with the Hilbert metric $\Hcal_D$. 

%
%
\begin{example}\label{ex:ellipsoid}
Let $(x,y,z)$ be coordinates on $\R^3$. For $a>0$ consider the ellipsoid
\[
	D_a=\Big\{(x,y,z)\in\R^3: x^2+\frac{1}{a^2} (y^2+z^2) <1 \Big\}.
\] 
Note that $D_a\subset \B^3$ if and only if $0<a\le 1$, and $D_1=\B^3$.
We will show that for $0<a<1$ the Hilbert metric on $D_a$ does not agree with the
minimal metric at the origin $\zero\in \R^3$. Since the $x$-axis intersects
$D_a$ in the interval $(-1,1)$, the Hilbert length of the vector $\be_1=(1,0,0)$ equals
$1$. 
Pick a $2$-plane $\Lambda\subset \R^3$ containing the vector $\be_1$. 
Due to rotational symmetry of $D_a$ in the $(y,z)$-coordinates the value 
of $\Mcal_{D_a}(\zero,\Lambda)$ \eqref{eq:McalL} does not depend on 
the choice of $\Lambda$, so we may take $\Lambda = \{z=0\}$.
Let $f=(f_1,f_2,f_3):\D\to D_a$ be a conformal harmonic disc with $f(0)=\zero$ 
and $df_0(\R^2)=\{z=0\}$. Replacing $f$ by $f(\E^{\imath t}z)$ 
for a suitable $t\in \R$Gives $f_x(0)=r\be_1$ and $f_y(0) = \pm r\be_2$ with $r=\|df_0\|>0$.
The projection $h=(f_1,f_2):\D\to\R^2$ maps $\D$ into the ellipse
$E_a=\{x^2+y^2/a^2<1\}$, $h(0)=\zero$, and $h$ is conformal at $0$. For $0<a<1$
we have $E_a\subsetneq\D$. Theorem \ref{th:SP} implies that $r=\|dh_0\|<1$;
equality is excluded since in that case we would necessarily have $h(\D)=\D$.
By a normal families argument we also have that $\sup_f \|df_0\|<1$. It follows
that $\Mcal_{D_a}(\zero,\Lambda)>1$ for every such $\Lambda$, and hence 
$g_{D_a}(\zero,\be_1)=\Mcal_{D_a}(\zero,\Lambda) >1=\Hcal_{D_a}(\zero,\be_1)$ if $0<a<1$.
\end{example}

\begin{problem}
On which bounded convex domains $D\subset\R^n,\ n\ge 3$ (besides the ball) 
does the Hilbert metric coincide with the minimal metric $g_D$?
Is the ball the only such domain?
\end{problem}

Denote by $\Rcal_n$ the Lie group of transformations $\R^n\to\R^n$Generated 
by the orthogonal group $O_n$, translations, and dilatations by positive numbers. 
Elements of $\Rcal_n$ are called {\em rigid transformations} of $\R^n$. 
Postcomposition of any conformal harmonic map $f:M\to\R^n$ by a rigid transformation of $\R^n$ 
is again a conformal harmonic map, and it is well known that $\Rcal_n$ is the largest group 
of diffeomorphisms of $\R^n$ having this property. This gives

\begin{proposition}\label{prop:isometries}
Given a domain $D\subset \R^n,\ n\ge 3,$ and a map $R\in \Rcal_n$, the restriction 
$R|_D:D\to D'=R(D)$ is an isometry of pseudometric spaces $(D,\rho_D)\to (D',\rho_{D'})$.
\end{proposition}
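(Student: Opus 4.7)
The plan is to prove this directly from the chain-of-discs definition of $\rho_D$ described above Proposition \ref{prop:distancedecreasing}, exploiting the fact that postcomposition by any rigid transformation $R\in\Rcal_n$ sends conformal harmonic discs into conformal harmonic discs.

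First I would fix $R\in\Rcal_n$, a domain $D\subset\R^n$, and set $D'=R(D)$; since $R$ is a diffeomorphism, $R|_D:D\to D'$ is a bijection. Take any two points $\bx,\by\in D$ and any chain $(f_1,\ldots,f_k;\,a_1,\ldots,a_k)$ of conformal harmonic discs $f_i:\D\to D$ satisfying \eqref{eq:chain}. The key observation, recalled immediately before the statement of Proposition \ref{prop:isometries}, is that $R\circ f_i:\D\to D'$ is again conformal harmonic for each $i$. The tuple $(R\circ f_1,\ldots,R\circ f_k;\,a_1,\ldots,a_k)$ therefore forms a chain from $R(\bx)$ to $R(\by)$ in $D'$, with the \emph{same} points $a_i\in\D$, and hence with the same associated sum $\sum_i \tfrac12\log\tfrac{1+|a_i|}{1-|a_i|}$. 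Taking the infimum over all chains gives
\[
  \rho_{D'}\bigl(R(\bx),R(\by)\bigr)\ \le\ \rho_D(\bx,\by).
\]

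The reverse inequality is obtained by applying the same argument to $R^{-1}\in\Rcal_n$, which sends $D'$ back to $D$: for any chain in $D'$ from $R(\bx)$ to $R(\by)$, postcomposition with $R^{-1}$ yields a chain in $D$ from $\bx$ to $\by$ with the identical sum, giving $\rho_D(\bx,\by)\le\rho_{D'}(R(\bx),R(\by))$. Combining the two inequalities yields equality, which is precisely the assertion that $R|_D$ is an isometry of pseudometric spaces.

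There is essentially no obstacle here beyond verifying that $R\circ f$ is conformal harmonic whenever $f$ is, which follows from the fact that $\Rcal_n$ is the group generated by orthogonal maps, translations, and positive dilatations: harmonicity and the conformality condition \eqref{eq:conformal} are each visibly preserved under these three operations (translation changes nothing, an orthogonal rotation preserves both $|f_x|=|f_y|$ and $f_x\cdot f_y=0$, and a dilatation by $\lambda>0$ scales $|f_x|^2, |f_y|^2, f_x\cdot f_y$ by the same factor $\lambda^2$). If one prefers a metric (rather than distance) formulation, the same argument applied at the infinitesimal level shows that $dR_\bx:T_\bx D\to T_{R(\bx)}D'$ intertwines $g_D$ and $g_{D'}$ up to the scalar factor corresponding to the dilatation component of $R$, and then integrating along paths recovers the statement for $\rho$.
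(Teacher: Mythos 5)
Your proof is correct and matches the paper's own (implicit) argument: the paper deduces this proposition directly from the observation that postcomposition by a rigid transformation preserves conformal harmonic maps, exactly as you spell out via the chain definition of $\rho_D$ and the inverse transformation $R^{-1}$. One small quibble with your closing aside: $dR_{\bx}$ is in fact an exact isometry $(T_{\bx}D,g_D)\to(T_{R(\bx)}D',g_{D'})$ with no leftover scalar factor, since the dilatation factor $\lambda$ is already absorbed into the image vector $dR_{\bx}(\bu)=\lambda Q\bu$ appearing in the definition \eqref{eq:gmin1}.
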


%
%
\begin{remark}\label{rem:Riemannianmanifolds}
The intrinsic pseudometric $g_D$ and the associated pseudodistance $\rho_D$
can be defined in the very same way on an arbitrary Riemannian manifold
$(D,\tilde g)$ of dimension at least three. The Riemannian metric $\tilde g$ determines 
the class of conformal harmonic maps $\D\to D$, which coincide
with conformal minimal discs in $D$. 
\end{remark}

%
%

\smallskip
\noindent{\bf Hyperbolic domains in $\R^n$.} 
We now introduce the notion of (complete) hyperbolic domains in $\R^n$, in analogy with 
Kobayashi hyperbolic complex manifolds.

%
%
\begin{definition} 
A domain $D\subset \R^n$ $(n\ge 3)$ is {\em hyperbolic} if the pseudodistance
$\rho_D$ is a distance function on $D$, and is {\em complete hyperbolic} if 
$(D,\rho_D)$ is a complete metric space. 
\end{definition}

%
%
\begin{example}
\begin{enumerate}[\rm (a)]
\item The ball $\B^n\subset\R^n$ $(n\ge 3)$ is complete hyperbolic. Indeed, the Cayley--Klein metric
\eqref{eq:CK} is complete, so the conclusion follows from Theorem \ref{th:metricsagree}.

\item Every bounded domain $D\subset \R^n$ is hyperbolic. Indeed, if $B$ is a ball containing 
$D$ then $\rho_D(\bx,\by) \ge \rho_B(\bx,\by)$ for any pair $\bx,\by\in D$, 
and $B$ is complete hyperbolic by (a).
However, a bounded domain need not be complete hyperbolic. For example, if $bD$
is strongly concave at $\bp\in bD$, there is a conformal linear disc $\Sigma \subset D\cup\{\bp\}$ 
containing $\bp$, and it is easily seen that $\bp$ is at finite $\rho$-distance from $D$. 

\item The half-space 
$
	\H^n=\{\bx=(x_1,\ldots,x_n)\in\R^n: x_n>0\}
$
is not hyperbolic, and the pseudodistance $\rho_{\H^n}$ vanishes on all planes $x_n=const$.  
However, every point on $b\H^n=\{x_n=0\}$ is at infinite minimal distance from points in $\H^n$
\cite[Lemma 5.2]{DrinovecForstneric2021X}. 
\end{enumerate}
\end{example}

By using the expression for the metric \eqref{eq:Mcal} on the ball we can determine the 
asymptotic rate of growth of the Finsler metric $\Mcal_D$, and hence of the distance function $\rho_D$,
on any bounded strongly convex domain $D\subset \R^n$ with $\Cscr^2$ boundary.
Let $\delta=\delta(\bx)=1-|\bx|$ denote the distance from a point $\bx\in \B^n\setminus\{0\}$ 
to the sphere $b\B^n$, and let $\Lambda\subset \R^n$ be a 2-plane forming an angle $\theta$ with $\bx$.
As $\bx$ approaches the sphere radially, we have 
\[
	\Mcal_{\B^n}(\bx,\theta):= 
	\Mcal_{\B^n}(\bx,\Lambda) \approx \frac{\sqrt{\cos^2\theta + 2\delta\sin^2\theta}}{2\delta},
\]
in the sense that the quotient of the two sides converges to $1$ as $\delta\to 0$. 
In particular,
\[
	\Mcal_{\B^n}(\bx,\pi/2) \approx \frac{1}{\sqrt{2\delta}}, \qquad 
	\Mcal_{\B^n}(\bx,\theta)\approx \frac{\cos\theta}{2\delta}\ \ \text{for $\theta\in[0,\pi/2)$}.
\]
Assume now that $D\subset \R^n$ is a bounded strongly convex domain with $\Cscr^2$ boundary.
There is a collar $U\subset\R^n$ around $bD$ such that every point $\bx\in U\cap D$
has a unique closest point $\pi(\bx)\in bD$. Comparison with inscribed 
and circumscribed balls to $D$ passing through the point $\pi(\bx)$ shows that
there are constants $0<c<C$ such that 
\begin{equation}\label{eq:estimates}
	c \, \frac{\sqrt{\cos^2\theta + 2\delta\sin^2\theta}}{2\delta} 
	\ \le\ \Mcal_D(\bx,\Lambda) \ \le\ 
	C \frac{\sqrt{\cos^2\theta + 2\delta\sin^2\theta}}{2\delta} 
\end{equation}
for $\bx\in U\cap D$, where $\delta=|\bx-\pi(\bx) |=\dist(\bx,bD)$ and $\theta$ is the angle
between the 2-plane $\Lambda$ and the normal vector $N_{\bx}=\delta^{-1}(\pi(\bx)-\bx)$
to $bD$ at $\pi(\bx)\in bD$. The upper bound uses comparison with inscribed balls,
so it holds on any domain with $\Cscr^2$ boundary, while the lower bound uses comparison 
with circumscribed ball, and hence it depends on strong convexity of $D$. 
These estimates are analogous to the asymptotic boundary estimates of the Kobayashi metric 
in bounded strongly pseudoconvex domains in $\C^n$, due to Graham \cite{Graham1975}. 
(There is a large subsequent literature on this subject.)  
These estimates show in particular 
that the distance function $\rho_D$ induced by $\Mcal_D$ is complete, thereby giving

%
%
\begin{theorem}\label{th:complete}
Every bounded strongly convex domain in $\R^n, n\ge 3,$ with $\Cscr^2$ boundary is complete hyperbolic in the minimal metric.
\end{theorem}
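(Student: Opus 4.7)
The plan is to derive from the lower half of \eqref{eq:estimates} a pointwise lower bound on $g_D$ near $bD$, integrate it along paths to show that the $\rho_D$-distance from any interior point out to $bD$ is infinite, and then pair this with an upper bound on $g_D$ on compact subsets of $D$ to complete the usual compactness argument for completeness. Hyperbolicity itself is immediate: any bounded $D$ sits inside some ball $B$, and a conformal harmonic disc in $D$ is \emph{a fortiori} one in $B$, so taking infima over a smaller class only increases $\Mcal$; hence $\rho_D\ge\rho_B$, and $\rho_B$ is a distance function by Theorem~\ref{th:metricsagree}.

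First I would convert the plane-based estimate in \eqref{eq:estimates} into a vector-based bound on $g_D$. Fix $\bx\in U\cap D$ and a unit vector $\bu$, and let $\phi_0\in[0,\pi/2]$ be the angle between $\bu$ and the unit normal $N_\bx$. A short linear-algebra computation shows that as $\Lambda$ ranges over 2-planes containing $\bu$, the angle $\theta$ between $\Lambda$ and $N_\bx$ ranges precisely over $[0,\phi_0]$. Since $\theta\mapsto\sqrt{\cos^2\theta+2\delta\sin^2\theta}$ is decreasing on $[0,\pi/2]$ for $2\delta<1$, combining \eqref{eq:estimates}, \eqref{eq:gmin2}, and the trivial inequality $\sqrt{\cos^2\phi_0+2\delta\sin^2\phi_0}\ge\cos\phi_0$ yields
\[
	g_D(\bx,\bu)\ \ge\ \frac{c\,|\bu\,\cdotp N_\bx|}{2\,\delta(\bx)},\qquad \bx\in U\cap D,\ \bu\in\R^n.
\]

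Second I would integrate this along paths. For a piecewise smooth path $\gamma:[0,T]\to U\cap D$ with $\delta(t):=\delta(\gamma(t))$, the identity $\dot\delta=-\dot\gamma\,\cdotp N_\gamma$ (since $\nabla\delta=-N$) gives $g_D(\gamma,\dot\gamma)\ge c|\dot\delta|/(2\delta)$, whence
\[
	\int_0^T g_D(\gamma,\dot\gamma)\,dt\ \ge\ \frac{c}{2}\left|\int_0^T\frac{\dot\delta}{\delta}\,dt\right|=\frac{c}{2}\bigl|\log\delta(T)-\log\delta(0)\bigr|.
\]
Splitting a general path from a fixed $\bx_0\in D$ to $\bx\in U$ at its last entry into $U$ bounds its $g_D$-length below by $(c/2)|\log(\delta(\bx)/\delta_0)|$, where $\delta_0$ is the width of $U$; this diverges as $\delta(\bx)\to 0$, so $\rho_D(\bx_0,\bx_k)\to\infty$ whenever $\bx_k$ accumulates on $bD$.

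Third I would close out completeness. Any $\rho_D$-Cauchy sequence $\{\bx_k\}$ is $\rho_D$-bounded, so by the previous step $\delta(\bx_k)$ is bounded away from $0$; hence $\{\bx_k\}$ lies in a compact $K\subset D$ and admits a Euclidean subsequential limit $\bp\in D$. Inserting the flat affine conformal disc $f(z)=\bx+r_0(x\bu+y\bv)$ with $\bu,\bv$ orthonormal and $r_0>0$ chosen so small that every such disc centred in $K$ stays in $D$ produces a uniform upper bound $g_D(\bx,\bu)\le|\bu|/r_0$ on $K$. Dominating a straight-line segment from $\bx_{k_j}$ to $\bp$ then gives $\rho_D(\bx_{k_j},\bp)\le|\bx_{k_j}-\bp|/r_0\to 0$, and the Cauchy property promotes this subsequential convergence to convergence of the full sequence.

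The main obstacle is the first step: translating the plane-based lower estimate in \eqref{eq:estimates} into a useful bound on the vector-based Finsler metric $g_D$ requires carefully identifying which planes $\Lambda\ni\bu$ minimise the relevant expression in \eqref{eq:gmin2}. Strong convexity is essential there, since it alone supplies the circumscribed-ball comparison underlying the lower half of \eqref{eq:estimates}; everything else in the argument goes through for any bounded $\Cscr^2$ domain.
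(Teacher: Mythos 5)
Your argument is correct and follows the same route the paper intends: the paper deduces the theorem directly from the boundary estimates \eqref{eq:estimates}, and your three steps (converting the plane-based lower bound into $g_D(\bx,\bu)\ge c\,|\bu\,\cdotp N_\bx|/(2\delta)$ via the monotonicity of $\theta\mapsto\sqrt{\cos^2\theta+2\delta\sin^2\theta}$, integrating $|\dot\delta|/\delta$ to get a logarithmic divergence near $bD$, and the standard compactness/flat-disc upper bound) are exactly the details the paper leaves implicit. No gaps.
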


%
%
\begin{remark}\label{rem:BDF}
Since the first version of this paper was posted on arXiv in February 2021, 
progress on the subject of minimal hyperbolicity was made 
by Drinovec Drnov\v sek and Forstneri\v c \cite{DrinovecForstneric2021X}.
Besides establishing basic characterizations of (complete) hyperbolicity, they 
proved that a not necessarily bounded convex domain in $\R^n$ is hyperbolic 
if and only it is complete hyperbolic if and only if it does not contain any affine 2-plane
\cite[Theorem 5.1]{DrinovecForstneric2021X}. Furthermore,
every bounded strongly minimally convex domain in $\R^n,\ n\ge 3,$
is complete hyperbolic \cite[Theorem 9.2]{DrinovecForstneric2021X}.
Their proof relies on the lower bound for $\Mcal_\Omega$ (and hence $g_\Omega$)
given by another Finsler pseudometric $\Fcal_\Omega:\Omega\times  G_2(\R^n)\to\R_+$
defined in terms of minimal plurisubharmonic functions (see \cite[Sect.\ 7]{DrinovecForstneric2021X}).
A discussion of this class of domains and functions
can be found in \cite{AlarconDrinovecForstnericLopez2019TAMS} 
and \cite[Chapter 8]{AlarconForstnericLopez2021}. Finally, they established a localization theorem 
for the minimal pseudometric, analogous to the results for the Kobayashi pseudometric 
(see \cite[Sect.\ 8]{DrinovecForstneric2021X}).
\end{remark}

The following problem remains open; an affirmative answer is known for the case when $M$ is 
a plane (see \cite[Lemma 5.2]{DrinovecForstneric2021X}). 

%
%
\begin{problem}
Let $M$ be an embedded minimal surface in $\R^3$. 
Is the minimal distance from $\R^3\setminus M$ to $M$ infinite?
Is the complement of a catenoid in $\R^3$ complete hyperbolic? 
\end{problem}

%
%
\noindent {\bf Extremal minimal discs.} 
Another important and natural question is the following.

\begin{problem}
Let $D\subset\R^n$ be a bounded strongly convex domain with smooth boundary.
Is there is a unique (up to a conformal reparametrization) 
extremal conformal harmonic disc through any given point $\bx\in D$
tangent to a given $2$-plane $\Lambda\in  G_2(\R^n)$ at $\bx$? 
\end{problem}

Theorem \ref{th:main} gives an affirmative answer
on the ball, and this is the only domain for which the answer seems to be known.
By the seminal result of Lempert \cite{Lempert1981,Lempert1987}, the analogous result holds
for the extremal holomorphic discs for the Kobayashi metric in any smoothly bounded strongly 
convex domain $D\subset\C^n$. 

We now describe a condition which implies an affirmative answer to this problem.
It explores a comparison between the Finsler pseudometric $\Mcal_D$ \eqref{eq:minmetric} 
on a domain $D\subset\R^n$ and a Kobayashi-type pseudometric on the tube 
$\Tcal_D=\D\times \imath\R^n \subset\C^n$. 
To this end, we recall a few basic facts from the theory of minimal surfaces;
see \cite[Chapter 2]{AlarconForstnericLopez2021} or \cite{Osserman1986}. 

A holomorphic map $F=(F_1,\ldots,F_n):\D\to \C^n$ satisfying
$\sum_{i=1}F'_i(z)^2=0$ for all $z\in\D$ is called a {\em holomorphic null map}.
The complex cone
\begin{equation}\label{eq:nullcone}
	\nullq^{n-1} = \Bigl\{\bz=(z_1,\ldots,z_n)\in\C^n: \sum_{i=1}^n z_i^2=0\Bigr\}	
\end{equation}
is called the {\em null cone}, and its elements are {\em null vectors}.
Hence, a holomorphic map $F$ is null if and only if the complex derivative $F'(z)$ lies in $\nullq^{n-1}$
for every $z$. It is a basic fact that the real and imaginary parts of a holomorphic null map $M\to \C^n$ 
are conformal harmonic maps $M\to \R^n$; conversely, every conformal harmonic map $\D\to \R^n$ 
from the disc is the real part of a holomorphic null map $\D\to \C^n$. 
(See \cite[Theorem 2.3.4]{AlarconForstnericLopez2021}.)
Given a domain $D\subset\R^n$, we denote by $\HN(\D,\Tcal_D)$ the space of all 
holomorphic null maps $F=(F_1,\ldots,F_n):\D\to \Tcal_D$.
We define a pseudometric on $(\bz,\bw)\in \Tcal_D\times \nullq^{n-1}$ by
\begin{equation}\label{eq:nullmetric}
	\Ncal_D(\bz,\bw) = \inf \bigl\{1/|a| : 
	\exists F\in \HN(\D,\Tcal_D),\ F(0)=\bz,\ F'(0)=a\bw \bigl\}.
\end{equation}
Here, $a$ may be a complex number. Clearly, $\Ncal_D(\bz,\bw)$ is 
bigger than or equal to the Kobayashi pseudonorm of the vector $\bw\in T_\bz(\Tcal_D)$,
since in the definition of the latter one uses all holomorphic discs as opposed to just null discs. 
Note that for each conformal frame $(\bu,\bv)\in \CF_n$ the vectors $\bu \pm \imath\bv \in\C^n$ are null vectors;
conversely, the real and imaginary components of a null vector $\bw\in\nullq^{n-1}$ form a conformal frame.
The correspondence between conformal harmonic discs in $D$ and holomorphic null discs
in $\Tcal_D$, mentioned above, shows that for all $\bx\in D$, $\by\in\R^n$ and
$(\bu,\bv)\in \CF_n$ we have that
\begin{equation}\label{eq:NM}
	\Ncal_D(\bx+\imath \by,\bu\pm \imath \bv) = \Mcal_D(\bx,(\bu,\bv)).
\end{equation}
This shows in particular that every extremal conformal harmonic
disc in $D$ is the real part of an extremal holomorphic null disc in the tube $\Tcal_D$.
Therefore, the correspondence between the extremal conformal minimal discs in 
the ball $\B^n\subset\R^n$ and the Kobayashi geodesics in the tube $\Tcal_{\B^n}$, 
used in the proof of Lemma \ref{lem:main}, extends to any bounded strongly convex domain 
$D\subset\R^n$ with $\Cscr^2$ boundary satisfying the following condition. 
The notion of a stationary holomorphic disc was explained in Remark \ref{rem:stationary}.

%
%
\begin{definition}\label{def:N} 
A domain $D\subset\R^n$ satisfies {\em Condition N} if for any 
point $\bx\in D$ and null vector $0\ne \bw \in\nullq^{n-1}$ there is a stationary 
holomorphic null disc in the tube $\Tcal_D$ through the point $\bx+\imath 0$ in the direction $\bw$.
\end{definition}

Our proof of Theorem \ref{th:main} implies the following. 

\begin{theorem} \label{th:N}
If $D$ is a bounded strongly convex domain in $\R^n$ $(n\ge 3)$ 
with smooth boundary satisfying Condition N, then for every point $\bx\in D$ and
$2$-plane $\Lambda\in  G_2(\R^n)$ there exists an extremal conformal harmonic
disc $f:\D\to D$ with $f(0)=\bx$ and $df_0(\R^2)=\Lambda$. Such $f$ is
unique up to a rotation of $\D$.
\end{theorem}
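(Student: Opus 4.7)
The plan is to adapt the argument of Lemma~\ref{lem:main} to a general strongly convex $D$ satisfying Condition~N, replacing the explicit stationary null disc \eqref{eq:Fdisc} of the ball by the one whose existence is postulated. Using the correspondence between conformal harmonic discs in $D$ and holomorphic null discs in the tube $\Tcal_D$ recorded in \eqref{eq:NM}, given $\bx\in D$ and $\Lambda\in G_2(\R^n)$, I would pick a unit conformal frame $(\bu,\bv)\in\CF_n$ spanning $\Lambda$ and form the null vector $\bw=\bu+\imath\bv\in\nullq^{n-1}$. Condition~N then produces a stationary holomorphic null disc $F:\D\to\Tcal_D$ with $F(0)=\bx+\imath\zero$ and $F'(0)=a\bw$ for some $a\in\C\setminus\{0\}$. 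Setting $f=\Re F$ yields a conformal harmonic disc with $f(0)=\bx$ and $df_0(\R^2)=\Lambda$, and it remains to show that this $f$ realizes the infimum in \eqref{eq:McalL} and is unique up to a rotation of $\D$.

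Next I would compare $f$ with an arbitrary competitor $g:\D\to D$ satisfying $g(0)=\bx$ and $dg_0(\R^2)=\Lambda$. After precomposing with $z\mapsto\E^{\imath s}z$ or $z\mapsto\E^{\imath s}\bar z$ for a suitable $s\in\R$, I may assume $dg_0=r\,df_0$ with $r>0$, and lift $g$ to the unique holomorphic null disc $G:\D\to\Tcal_D$ with $G(0)=F(0)$, so that $G'(0)=r\,F'(0)$. The core step invokes stationarity of $F$: by Definition~\ref{def:N} there exist a positive continuous function $q$ on $b\D$ and a holomorphic map $\tilde f:\D\to\C^n$ with boundary values $\tilde f(z)=z\,q(z)\,\overline{\nu(z)}=z\,q(z)\,\nu(z)$, where $\nu(z)$ is the real outer unit normal to $bD$ at $f(z)\in bD$. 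The map $(F-G)/z$ is holomorphic on $\D$ with value $(1-r)F'(0)$ at the origin, so
\[
h(z)=\Re\bigl\langle (F(z)-G(z))/z,\,\tilde f(z)\bigr\rangle
\]
extends to a harmonic function on $\D$. Using $z\bar z=1$ on $b\D$, $h$ simplifies there to $q(z)\,\langle f(z)-g(z),\nu(z)\rangle$, which is nonnegative a.e.\ by strong convexity of $D$ and strictly positive on a set of positive measure unless $g\equiv f$.

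The mean value property gives $h(0)=(1-r)\,\Re\langle F'(0),\tilde f(0)\rangle\ge 0$, and applying the same argument to the linear probe $g(z)=\bx+\epsilon\,df_0(z)$ (with $g(\overline\D)\Subset D$ for small $\epsilon>0$) yields the strict inequality $\Re\langle F'(0),\tilde f(0)\rangle>0$, hence $r\le 1$. Equality $r=1$ forces $h\equiv 0$ and so $g=f$ on $b\D$; uniqueness of the null holomorphic lift with prescribed value at the origin then gives $G=F$, which is precisely the required uniqueness modulo a rotation of $\D$. The main obstacle I anticipate is the boundary analysis underlying the identity $h(z)=q(z)\langle f(z)-g(z),\nu(z)\rangle$ on $b\D$: the holomorphic maps $F$ and $G$ lie in $H^2(\D,\C^n)$ since their real parts are bounded by $\diam D$ and the Hilbert transform is an $L^2$-isometry, giving a.e.\ nontangential boundary values, and then continuity of $\tilde f$ and $q$ on $\overline\D$ justifies passing both the inner product and the mean value identity to the boundary in the $L^1$-sense. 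Once this technical point and the strong-convexity estimate $\langle f(z)-g(z),\nu(z)\rangle\ge 0$ on $b\D$ are in place, the remainder is a direct transcription of the proof of Lemma~\ref{lem:main}.
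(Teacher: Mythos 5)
Your proposal is correct and takes essentially the same route as the paper: the authors obtain Theorem \ref{th:N} by taking the real part of the stationary null disc furnished by Condition N and invoking the proof of Theorem \ref{th:main} (together with Lempert's uniqueness theorem for stationary discs), which is exactly the Lemma \ref{lem:main}--style positivity argument you reconstruct in detail, with $z\,q(z)\,\overline{\nu(z)}$ in place of the explicit $\tilde f$ of \eqref{eq:tildef}. The boundary-regularity points you flag ($G\in H^2$ via the $L^2$-boundedness of the Hilbert transform, boundedness of the holomorphic extension of $zq\overline{\nu}$) are handled in the same way as in the paper's proof of Lemma \ref{lem:main}.
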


\begin{proof}
Let $0\ne \bw=\bu-\imath \bv\in \nullq^{n-1}$ be  such that $\Lambda=\span\{\bu,\bv\}$.
By Condition N there is a stationary holomorphic null disc $F:\D\to \Tcal_D$ with $F(0)=\bx+\imath 0$ 
and $F'(0)=\alpha \bw$ for $\alpha\in\C$, and $F$ is unique up to rotations of $\D$ by Lempert's
theorem \cite[Theorem 2]{Lempert1981}. The real part $f=\Re F:\D\to D$ is then a conformal
harmonic disc as in the theorem.
\end{proof}

\begin{problem} \label{prob:N} 
Which bounded strongly convex domains in $\R^n$, besides the ball, satisfy Condition N?
\end{problem}

Complex geodesics of the Kobayashi metric in tubes over convex domains $D\subset\R^n$ 
were studied by Zaj\k{a}c \cite{Zajac2015,Zajac2016}, Pflug and Zwonek 
\cite{PflugZwonek2018}, and Zwonek \cite{Zwonek2021X}. It would be of interest to see
whether these works can be used to give information on validity of Condition N.
The fact that Condition N holds on the ball $\B^n$ may simply be a lucky coincidence 
which makes our analysis work on this most symmetric domain.

%
%
\subsection*{Acknowledgements}
Forstneri\v c was partially supported by research program P1-0291 and grant J1-9104
from ARRS, Republic of Slovenia. Kalaj was partially supported by a research fund of
University of Montenegro. We wish to thank Antonio Alarc\'on for his remarks which led
to improved presentation, Dmitry Khavinson for information 
regarding harmonic self-maps of the disc, L\'aszl\'o Lempert and Stefan Nemirovski 
for information regarding projectively invariant metrics, 
Joaqu\'in P\'erez for consultation on the state of the art concerning growth of 
conformal minimal surfaces, and Sylwester Z{\k a}jac and 
W{\l}odzimierz Zwonek for communications
concerning geodesics of the Kobayashi metric in tube domains. The first named author
also thanks Miodrag Mateljevi\'c with whom he discussed this problem in 2016.



\vspace*{5mm}

\noindent Franc Forstneri\v c

\noindent University of Ljubljana, Faculty of Mathematics and Physics, 
Jadranska 19, SI--1000 Ljubljana, Slovenia, and
Institute of Mathematics, Physics and Mechanics, Jadranska 19, SI--1000 Ljubljana, Slovenia

\noindent e-mail: {\tt franc.forstneric@fmf.uni-lj.si}

\vspace*{5mm}
\noindent David Kalaj

\noindent University of Montenegro, Faculty of Natural Sciences and Mathematics, 81000, Podgorica, Montenegro

\noindent e-mail: {\tt davidk@ucg.ac.me}

\end{document}